\theoremstyle{plain} 
\newtheorem{theorem}{Theorem}
\numberwithin{theorem}{section}
\newtheorem*{theorem*}{Theorem}
\newtheorem{prop}[theorem]{Proposition}
\newtheorem{lemma}[theorem]{Lemma}
\newtheorem{coro}[theorem]{Corollary}
\newtheorem{definition}[theorem]{Definition}
\theoremstyle{definition} 
\newtheorem*{example}{Example}
\theoremstyle{remark} 
\newtheorem{remark}[theorem]{Remark}
\theoremstyle{definition}
\newcommand{\be}{\begin{equation} }
\newcommand{\ee}{\end{equation} }
\newcommand{\bbC}{\mathbb{C}}
\newcommand{\C}{\mathbb{C}} 
\newcommand{\D}{\mathbb{D}}
\newcommand{\Z}{\mathbb{Z}}
\newcommand{\cA}{\mathcal{A}}
\newcommand{\cB}{\mathcal{B}}
\newcommand{\cC}{\mathcal{C}}
\newcommand{\cH}{\mathcal{H}}
\newcommand{\cM}{\mathcal{M}}
\newcommand{\cN}{\mathcal{N}}
\newcommand{\cQ}{\mathcal{Q}}
\newcommand{\sO}{\mathscr{O}}
\newcommand{\shD}{\mathscr{D}}
\newcommand{\Var}{\textup{Var}}
\newcommand{\Spec}{\textup{Spec}}
\newcommand{\DR}{\textup{DR}}
\newcommand{\bs}{{\bf s}}
\DeclareMathOperator{\can}{can}
\DeclareMathOperator{\Cone}{Cone}
\newcommand{\et}{\end{thm}}
\newcommand{\bde}{\begin{de}}
\newcommand{\ede}{\end{de}}
\newcommand{\bc}{\begin{cor}}
\newcommand{\ec}{\end{cor}}
\newcommand{\blm}{\begin{lem}}
\newcommand{\elm}{\end{lem}}
\newcommand{\bp}{\begin{proof}}
\newcommand{\ep}{\end{proof}}
\newcommand{\beq}{\begin{equation*}\label{xx}}
\newcommand{\eeq}{\end{equation*}}
\renewcommand{\nu}{{\mathcal{V}}}
\renewcommand{\ref}[1]{\hyperref[#1]{\ref*{#1}}}
\newcommand{\nearbyd}[2]{\Psi_{f, #2}{#1}}
\newcommand{\nearbydp}[2]{\psi_{f, #2}{#1}}
\newcommand{\vanish}[1]{\Phi_f{#1}}
\newcommand{\vanishp}[1]{\phi_f{#1}}
\title{On the comparison of nearby cycles via $b$-functions}
\author{Lei Wu}
\address{Department of Mathematics, University of Utah.
155 S 1400 E,  Salt Lake City, UT 84112, USA}
\email{lwu@math.utah.edu}
\keywords{$\mathscr D$-module; perverse sheaf; $b$-function; nearby cycle; vanishing cycle.}
\subjclass[2010]{14F10; 32S30; 32S40.}
\begin{document}

\begin{abstract}
    In this article, we give a simple proof of the comparison of nearby and vanishing cycles in the sense of Riemann-Hilbert correspondence following the idea of Beilinson and Bernstein, without using the Kashiwara-Malgrange $V$-filtrations. 
\end{abstract}

\maketitle

\section{Introduction}
The idea of the nearby and vanishing cycles can be traced back to Grothendieck and they are first introduced by Deligne \cite{De}. Nearby and vanishing cycles are widely studied from different perspectives, for instance by Beilinson \cite{BeiGP} algebraically and Kashiwara and Schapira \cite{KS} under the microlocal setting. They are also very useful, for instance Saito \cite{Sai88} used nearby and vanishing cycles to give an inductive definition of pure Hodge modules.  

Using the so-called Kashiwara-Malgrange filtration, as well as its refinement, the $V$-filtration, Kashiwara \cite{KasV} defined nearby and vanishing cycles for holonomic $\shD$-modules and proved a comparison theorem in the sense of Riemann-Hilbert correspondence (see \cite[Theorem 2]{KasV}). 

Beilinson and Bernstein constructed the unipotent (or more precisely, nilpotent) nearby and vanishing cycles for holonomic $\shD$-modules using $b$-functions under the algebraic setting in \cite[\S 4.2]{BB93} by using the complete ring $\bbC[[t]]$; see also \cite[\S 2.4]{BG} by using localization of $\bbC[t]$ without completion. One then can ``glue" the open part and the vanishing cycle of a holonomic $\shD$-module along any regular functions in the sense of Beilinson \cite{BeiGP} (see also \cite[Theorem 4.6.28.1]{Gilnotes} and \cite{Lich}).

In this article, by using the theory of relative holonomic $\shD$-modules by Maisonobe \cite{Mai}
and its development in \cite{WZ,BVWZ1,BVWZ2}, we give a slight refinement of the construction of nearby and vanishing cycles of Beilinson and Bernstein to other eigenvalues. Then we will give a simple proof of the comparison of nearby and vanishing cycles in the sense of Riemann-Hilbert correspondence without using $V$-filtrations (see Theorem \ref{thm:maincom}), which we believe is new. However, essentially the proof has been hinted by Beilinson and Bernstein (see for instance \cite[Remark 4.2.2(v)]{BB93}). Another point of the proof of the comparison is that since the proof is purely algebraic, it can be transplanted on smooth varieties over fields of characteristic $0$ without much modification.

Let $X$ be a smooth algebraic variety over $\bbC$ (or a complex manifold) and $f$ a regular function on $X$ (or a holomorphic function on $X$) and let $\cM$ be a holonomic $\shD_X$-module.  For $\alpha\in\bbC$, we denote the $\alpha$-nearby cycle of $\cM$ along $f$ by $\nearbyd{\cM}{\alpha}$ and denote the vanishing cycle of $\cM$ along $f$ by $\vanish{\cM}$ (see \S \ref{subsec:nbvan} for definitions). The sheaves $\nearbyd{\cM}{0}$ and $\vanish{\cM}$ are the same as $\Psi^{\textup{nil}}(\cM)$ and $\Phi^{\textup{nil}}(
\cM)$ in \cite[\S 2.4]{BG}.

From construction, $\nearbyd{\cM}{\alpha}$ and $\vanish{\cM}$ have the action by $s$, where $s$ is the independent variable introduced in defining $b$-functions (see \S \ref{subsec:bf}) and $\nearbyd{\cM}{\alpha}$ only depends on 
$\cM|_U$.

The $b$-function is also called the Bernstein-Sato polynomial. At least for $\cM=\sO_X$, there are algorithms to compute $b$-functions with the help of computer program (for instance Singular and Macaulay2). On the contrary, Kashiwara-Malgrange filtrations are more difficult to deal with from algorithmic perspectives as far as we know. Therefore, it seems easier to deal with nearby and vanishing cycles via $b$-functions.   

Following Beilinson's idea in \cite{BeiGP}, we define the nearby and vanishing cycles for $\bbC$-perverse sheaves by using Jordan blocks. Let $K$ be a perverse sheaf of $\bbC$-coefficients on $X$. One can also work with perverse sheaves with arbitrary fields of coefficients, see for instance \cite{Rei}. When we talk about perverse sheaves on an algebraic variety over $\bbC$, we use the Euclidean topology by default. If one wants to work with algebraic varieties with other base fields of characteristic zero, then one can consider \'etale sheaves. 

For $\lambda\in \bbC^*$ we define the $\lambda$-nearby cycle by 
\[\nearbydp{K}{\lambda}\coloneqq \lim_{m\to\infty}i^{-1}Rj_*(K|_U\otimes f^{-1}_0L^{1/\lambda}_m),\]
where $j: U=X\setminus D\hookrightarrow X$ and $D$ is the divisor defined by $f=0$, $i\colon D\hookrightarrow X$ and $f_0=f|_U$. Here $L^\lambda_m$ is isomorphic to the local system given by a $m\times m$ Jordan block with the eigenvalue $\lambda^{-1}$ on $\bbC^*$ (see \S \ref{subsec:exjb} for the construction). For all $m\in \Z$, $L^\lambda_m$ naturally form a direct system with respect to the natural order on $\Z$. The vanishing cycle of $K^\bullet$ along $f$ is then defined by 
\[\vanishp{K}\coloneqq \Cone(i^{-1} K\rightarrow \nearbydp{K}{1}).\]
We then have a canonical morphism
\[\can\colon \nearbydp{K}{1}\to  \vanishp{K}\]
fitting in the tautological triangle
\[i^{-1}K\rightarrow \nearbydp{K}{1}\xrightarrow{\can}  \vanishp{K}\xrightarrow{+1}.\]
The monodromy action on $L^{1/\lambda}_m$ naturally induces the monodromy action on both $\nearbydp{K}{\lambda}$ and $\vanishp{K}$, denoted by $T$. By construction, $T-\lambda$ acts on $\nearbydp{K}{\lambda}$ nilpotently.
When $\lambda=1$, $\log T_u$ induces 
\[\Var\colon  \vanishp{K}\rightarrow\nearbydp{K}{1}.\]
See \S \ref{sec:nvp} for details. The above definition of nearby and vanishing cycles coincides with Deligne's construction (see \cite[Chapter VI. 6.4.6]{Bj} for $\nearbydp{K}{1}$ and \cite[\S 3]{Wnv} and \cite{Rei} in general). The morphism $\Var$ corresponds to the ``Var" morphism defined in \cite{KasV}. 

The rest of this paper is mainly about the proof of the following comparison theorem. 
\begin{theorem}\label{thm:maincom}
Assume that $\cM$ is a regular holonomic $\shD_X$-module. Then we have 
\[\DR(\nearbyd{\cM}{\alpha})\simeq i_*\nearbydp{\DR(\cM)}{\lambda}[-1]\]
for every $\alpha\in \bbC$ where $\lambda=e^{2\pi \sqrt{-1}\alpha}$ and the monodromy action $T$ corresponds to 
\[T\simeq \DR(e^{-2\pi \sqrt{-1}s})=\DR(\lambda\cdot e^{-2\pi \sqrt{-1}(s+\alpha)})\]
under the isomorphisms $($since $s+\alpha$ acts on $\nearbyd{\cM}{\alpha}$ nilpotently$)$, and 
\[\DR(\vanish{\cM})\simeq i_*\vanishp{\DR(\cM)}[-1]\]
where $\DR$ denotes the de Rham functor for $\shD$-modules.
Furthermore, we have natural morphisms of $\shD$-modules
\[v: \vanish{\cM}\to \nearbyd{\cM}{0} \textup{ and } c:\nearbyd{\cM}{0}\to \vanish{\cM}\]
so that there exists an isomorphism of quivers
\[\begin{tikzcd}[every arrow/.append style={shift left}]
 \DR\big( \nearbyd{\cM}{0}  \arrow{r}{c} & \vanish{\cM}\arrow{l}{v}\big)\simeq i_*\big(\psi_{f, 1}\big(\DR(\cM)\big) \arrow{r}{\can}  &\phi_{f}\big(\DR(\cM)\big)[-1] \arrow{l}{\Var}\big).
 \end{tikzcd}\]
\end{theorem}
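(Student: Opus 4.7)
The plan is to compare the two sides at each finite stage $m$ using the Riemann--Hilbert correspondence for relative holonomic $\shD$-modules developed by Maisonobe \cite{Mai} and in \cite{WZ,BVWZ1,BVWZ2}, and then pass to the limit in $m$. The central observation is that on $U$ Deligne's construction identifies, under $\DR$, the relative $\shD_U$-module built from $f^{s+\alpha}$ modulo $(s+\alpha)^m$ with (up to orientation) the Jordan block local system $L^{1/\lambda}_m$, where $\lambda = e^{2\pi\sqrt{-1}\alpha}$, and the algebraic $s$-action on the left corresponds, up to the factor $-2\pi\sqrt{-1}$, to $\log T_u$ on the right, so that $e^{-2\pi\sqrt{-1}s}$ realises $T$.

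The first step is the open-set comparison. For each $m$, the relative $\shD_U$-module $\cM|_U[s]\cdot f^{s+\alpha}/(s+\alpha)^m$ is relatively regular holonomic, and the observation above together with the compatibility of the de Rham functor with tensor products yields a natural isomorphism between its de Rham complex and $\DR(\cM|_U)\otimes f_0^{-1}L^{1/\lambda}_m[-1]$. Since $\cM$ is regular holonomic, relative regularity is preserved under $j_*$, and Maisonobe's relative Riemann--Hilbert matches the algebraic pushforward $j_*$ on the $\shD$-side with $Rj_*$ on the topological side. Applying $i^{-1}$ then gives the comparison at finite level $m$ between the $b$-function construction truncated by $(s+\alpha)^m$ and $i^{-1}Rj_*(K|_U\otimes f_0^{-1}L^{1/\lambda}_m)$, up to the shift $[-1]$.

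The second step is to pass to the limit in $m$. On the $\shD$-side, the system stabilises for $m$ sufficiently large because the relevant $b$-function has only finitely many roots in $-\alpha+\Z$, and $\nearbyd{\cM}{\alpha}$ is precisely the stabilised quotient; on the topological side the direct limit of $\{L^{1/\lambda}_m\}$ yields the nearby cycle by construction. Since both systems eventually stabilise and $\DR$ preserves the isomorphism at each level, the two limits agree, yielding the first isomorphism together with the claimed monodromy identification. The vanishing cycle comparison follows by passing to cones: both $\vanish{\cM}$ and $\vanishp{\DR(\cM)}$ are defined as the cone of the canonical map from $i^{-1}$ (resp.\ the algebraic restriction) into the $\lambda=1$ (resp.\ $\alpha=0$) nearby cycle, and the finite-stage comparison is compatible with these triangles. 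Finally the maps $c,v$ on the $\shD$-side are the algebraic counterparts of multiplication by $f$ and of the residue coming from the $b$-function relation, so that under $\DR$ they map to $\can$ and $\Var$ respectively.

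The main obstacle lies in two places. The first is pinning down, at the open-set level, the precise identification between the relative $\shD_U$-module built from $f^{s+\alpha}/(s+\alpha)^m$ and the local system $L^{1/\lambda}_m$ as a representation of the monodromy, including the correct sign conventions so that $T\simeq\DR(e^{-2\pi\sqrt{-1}s})$; this requires careful matching of generators through Deligne's flat connection construction, paying attention to the inversion coming from $f_0^{-1}$ on the topological side. The second is controlling the interchange of $\DR$ with both $j_*$ and the $m\to\infty$ limit, which rests on the relative regularity theory of \cite{WZ,BVWZ1,BVWZ2} and the finiteness of relevant $b$-function roots. Once these are in place, the remaining verifications (the cone identification and the correspondence $(c,v)\leftrightarrow(\can,\Var)$) reduce to functorial bookkeeping.
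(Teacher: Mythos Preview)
Your overall architecture---compare at each finite level $m$ and then pass to the limit---matches the paper's, but the passage to the limit contains a genuine gap.

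The claim that ``both systems eventually stabilise'' is false. Take $X=\bbC$, $f=t$, $\cM=\sO_X$, $\alpha=0$: the stalk of $i^{-1}Rj_*(L^1_m)$ at the origin has $H^0=\ker(T-1)\simeq\bbC$ and $H^1=\mathrm{coker}(T-1)\simeq\bbC$ for every $m$, but the transition map $L^1_m\to L^1_{m+1}$ induces the \emph{zero} map on $H^1$, so the system never stabilises as objects; only the direct limit exists and kills the spurious $H^1$. The same happens on the $\shD$-side: the truncations $j_*\big(\cM_U[s]f^s/(s+\alpha)^m\big)$ grow with $m$ and do not stabilise. What is true is that $(s+\alpha)$ acts nilpotently on $\nearbyd{\cM}{\alpha}$, but this does not make $\nearbyd{\cM}{\alpha}$ the value of your truncated system at any finite $m$, and you give no argument identifying $\nearbyd{\cM}{\alpha}=j_*/j_!$ with the limit of your system. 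The paper bridges this gap with two ingredients you are missing: first, the description of $\nearbyd{\cM}{\alpha}$ as the generalized $(-\alpha)$-eigenspace of $s$ acting on $\shD_X[s]\cM_0 f^{s-k}/\shD_X[s]\cM_0 f^{s+k}$ (Proposition~\ref{prop:altnb}); second, an elementary ``infinite Jordan block'' lemma (Lemma~\ref{lm:inftyjb}) showing that for any linear operator with a minimal polynomial, the generalized $\alpha$-eigenspace is exactly the kernel of the induced operator on the infinite direct sum $\bigoplus_k W\otimes e_k$. This is what replaces your stabilisation claim and is the heart of the argument. Note also that the paper's finite-level objects are not your truncations by $(s+\alpha)^m$ but rather twists $\cN^{\alpha,k}_m$ by the Jordan block together with the two-term complex $[\cN^{\alpha,k}_m\xrightarrow{s}\cN^{\alpha,k}_m]$; Proposition~\ref{prop:comptwist} compares $\DR$ of this complex with $Rj_*(\DR(\cM_U)\otimes f_0^{-1}L^{1/\lambda}_m)$ using only the ordinary (not relative) Riemann--Hilbert correspondence.

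Two smaller points. Your description of $\vanish{\cM}$ as ``the cone of the canonical map from the algebraic restriction into the nearby cycle'' is not its definition in this paper: $\vanish{\cM}$ is defined via Beilinson's maximal extension $\Xi(\cM_U)$, and the cone description \eqref{eq:vancqi} is something one proves using the two short exact sequences \eqref{eq:b-} and \eqref{eq:b+}. And your description of $c,v$ as ``multiplication by $f$ and the residue coming from the $b$-function relation'' is not correct here; they are defined through $\beta_\pm$ in the $\Xi$-construction (equations \eqref{eq:canD}--\eqref{eq:varD}), with $v\circ c=s$, and the identification $\DR(v)=\Var$ comes from tracking that $s$ on $\nearbyd{\cM}{0}$ corresponds to $J_{0,\infty}=-\log T_u/2\pi\sqrt{-1}$ under the comparison.
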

\begin{remark}
In Theorem \ref{thm:maincom}, we see that $\nearbyd{\cM}{\alpha+k}$ correspond to a unique nearby cycle of $\DR(\cM)$ for all $k\in \Z$. Namely, the Riemann-Hilbert correspondence for nearby cycles is $\Z$-to-1. However, $\nearbyd{\cM}{\alpha+k}$ is unique up to the $t$-action by Eq.\eqref{eq:tactnb}.
\end{remark}

\subsection*{Acknowledgement}
The author thanks P. Zhou for useful comments.

\section{Nearby and vanishing cycles for holonomic $\shD$-modules}\label{sec:nvd}
\subsection{$b$-function and Localization}\label{subsec:bf}
We recall the construction of $b$-functions. Let $X$ be a smooth algebraic variety over $\bbC$ of dimension $n$ and let $f$ be a regular function on $X$. We denote by $D$ the divisor defined by $f=0$, by $j: U= X\setminus D\hookrightarrow X$ the open embedding and by $i\colon D\hookrightarrow X$ the closed embedding. We assume that $\cM_U$ is a (left) holonomic $\shD_U$-module so that 
\[\cM_U=\shD_U\cdot \cM_0|_U\]
for some fixed coherent $\sO_X$-submodule $\cM_0\subseteq j_*(\cM_U)$ throughout this section. 
We then introduce an independent variable $s$ and consider the free $\bbC[s]$-module
\[j_*(\cM_U[s]\cdot f^s)=j_*(\cM_U)\cdot f^s\otimes_\bbC\bbC[s].\]
The module $j_*(\cM_U[s]\cdot f^s)$ has a natural $\shD_X[s]$-module structure by requiring 
\[v(f^s)=sv(f)f^{s-1},\]
for any vector field $v$ on $X$, where $\shD_X[s]\coloneqq \shD_X\otimes_\bbC\bbC[s]$. Notice that the module $j_*(\cM_U[s]\cdot f^s)$ is not necessarily coherent over $\shD_X[s]$. We then consider the coherent $\shD_X[s]$-submodule generated by $\cM_0\cdot f^{s+k}$
\[\shD_X[s]\cM_0\cdot f^{s+k}\subseteq j_*(\cM_U[s]\cdot f^s)\]
for every $k\in\Z$. It is obvious that we have inclusions
\[\shD_X[s]\cM_0\cdot f^{s+k_1}\subseteq \shD_X[\bs]\cM_0\cdot f^{s+k_2}\]
when $k_1\ge k_2$.

\begin{definition}[$b$-function]
The $b$-function of $\cM_U$ along $f$, also called the Bernstein-Sato polynomial, is the monic polynomial $b(s)\in \bbC[s]$ of the least degree so that 
\[b(s) \textup{ annihilates } \dfrac{\shD_X[s]\cM_0\cdot f^s}{\shD_X[s]\cM_0\cdot f^{s+1}}.\]
\end{definition}
In particular, if we pick $\cM_U=\sO_U$ and $\cM_0=\sO_X$, then the above definition gives us the usual $b$-function for $f$ (see for instance \cite{Kasbf}). From definition, the roots of the $b$-function of $\cM_U$ depends on the choice of $\cM_0$. However, we will see that an arithmetic set generated by the roots is independent with the choice. 
\begin{remark}
In the case that $X$ is a complex manifold and $f$ is a holomorphic function on $X$, for an analytic holonomic $\shD_X$-module $\cM$, one can use $\cM(*D)$, the algebraic localization of $\cM$ along $D$, to replace $j_*(\cM_U)$ and define $b$-functions in the analytic setting in a similar way. 
\end{remark}
\begin{theorem}[Bernstein and Sato]
The $b$-functions along $f$ exist for holonomic $\shD_U$-modules.
\end{theorem}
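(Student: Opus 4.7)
The plan is to work locally on an affine open $V\subseteq X$ on which $\cM_0|_V$ is generated as an $\sO_V$-module by finitely many sections $m_1,\ldots,m_r$, and to produce, on each such chart, a nonzero polynomial $b(s)\in\bbC[s]$ with
\[
b(s)\,m_i f^s \in \shD_V[s]\cdot\cM_0\cdot f^{s+1}\quad \text{for every } i=1,\ldots,r.
\]
The set of polynomials realizing such an inclusion forms an ideal in $\bbC[s]$, so its monic generator yields a local $b$-function on each chart; taking the least common multiple over a finite affine cover then produces the claimed global monic $b$-function of minimal degree.

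The heart of the argument is Bernstein's idea. Writing $\shD_X(s)\coloneqq\shD_X\otimes_\bbC\bbC(s)$ for short, consider the $\shD_X(s)$-module
\[
\cN \coloneqq \shD_X(s)\cdot\bigl(\cM_0\cdot f^s\bigr)\ \subseteq\ j_*(\cM_U)\otimes_\bbC\bbC(s)\cdot f^s,
\]
together with the descending chain of $\shD_X(s)$-submodules $\cN_k\coloneqq \shD_X(s)\cdot\cM_0\cdot f^{s+k}$. The first step is to show that $\cN=\cN_0$ is holonomic over $\shD_X(s)$ by combining a good order filtration on $\shD_X$ adapted to the holonomic $\shD_U$-module $\cM_U$ with the multiplicative filtration by powers of $f^{-1}$; Bernstein's inequality together with the holonomicity of $\cM_U$ on $U$ then pins the characteristic variety of $\cN$ to dimension $n$. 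Finite length of $\cN$ forces the chain $\cN_0\supseteq\cN_1\supseteq\cdots$ to stabilize, say $\cN_N=\cN_{N+1}$ for some $N\ge 0$. One then writes each $m_i f^{s+N}=\sum_j P_{ij}(s)\,m_j f^{s+N+1}$ with $P_{ij}(s)\in\shD_X(s)$ and applies the standard $s$-shift $s\mapsto s-N$ to obtain identities $m_i f^s=\sum_j P_{ij}(s-N)\,m_j f^{s+1}$; clearing the common $\bbC[s]$-denominator of the finitely many coefficients $P_{ij}(s-N)$ delivers the desired $b(s)$.

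The main obstacle is the holonomicity of $\cN$: this is essentially the classical Bernstein–Sato theorem carried out with coefficients in $\cM_U$ rather than $\sO_U$, and it requires building a good filtration that simultaneously controls the $s$-action and multiplication by $f^{-1}$. The cleanest route is to realize $\cN$ as the generic fiber over $\Spec\bbC(s)$ of a relative holonomic $\shD$-module on $X\times\A^1_s$, whose relative holonomicity follows from the holonomicity of $\cM_U$ by a standard base-change argument — precisely the framework cited from \cite{Mai,WZ,BVWZ1,BVWZ2} in the introduction. Once holonomicity is in hand, the stabilization of the chain $\cN_k$, the $s$-shift, denominator clearing, and gluing the local $b$-functions are all formal bookkeeping.
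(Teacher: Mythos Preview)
The paper does not actually prove this theorem; it simply attributes the result to Bernstein (algebraic case) and Sato (analytic case) for $\sO_U$, and to Bj\"ork \cite[Chapter~VI]{Bj} for arbitrary holonomic modules. Your proposal sketches the classical Bernstein argument in this generality, and the outline is sound: pass to $\bbC(s)$-coefficients, establish holonomicity of $\shD_X(s)\cM_0 f^s$, use finite length to stabilize the descending chain $\cN_k$, apply the $t$-shift $s\mapsto s-N$, and clear denominators. The local-to-global step via an LCM over a finite affine cover is fine for varieties; in the analytic setting one restricts to relatively compact opens, as Bj\"ork does.

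One caution about the ``cleanest route'' you propose at the end. The relative-holonomicity statements in \cite{Mai} (and their elaborations in \cite{WZ,BVWZ1,BVWZ2}) that the paper invokes later already take the existence of $b$-functions as input; in particular, Maisonobe's proof that $\shD_X[s]\cM_0 f^s$ is relative holonomic uses Bernstein--Sato type data. Appealing to that framework to deduce holonomicity of $\cN$ over $\bbC(s)$ therefore risks circularity. The direct argument you describe first --- building a good filtration that simultaneously controls the $s$-action and powers of $f^{-1}$, then applying Bernstein's inequality over the field $\bbC(s)$ --- is exactly what Bj\"ork carries out, and is the step that genuinely establishes holonomicity of $\cN$ without assuming what you are trying to prove. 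So keep that as the load-bearing argument and treat the relative-holonomicity remark as a consistency check rather than a shortcut.
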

The above theorem for $\sO_U$ is due to Bernstein algebraically and Sato analytically. Bj\" ork extended it for arbitrary holonomic modules in the analytic setting (see \cite[Chapter VI]{Bj}).
\begin{definition}[Localization]
Assume that $\cN$ is a $($left$)$ coherent $\shD_X[s]$-module and $q$ is a prime ideal in $\bbC[s]$. Then we define the localization of $\cN$ at $q$ by 
\[\cN_q=\cN\otimes_{\bbC[s]}\bbC[s]_q\]
where $\bbC[s]_q$ is the localization of $\bbC[s]$ at $q$. In particular, if $q$ is the ideal generated by $0\in \bbC[s]$ $($i.e. $q$ is the generic point of $\bbC=\Spec\bbC[s]$$)$, then $\cN_q$ becomes a coherent $\shD_{X(s)}$-module, where $X(s)$ is the variety defined over $\bbC(s)$ of $X$ after the base change $\bbC\to \bbC(s)$, where $\bbC(s)$ is the fractional field of $\bbC[s]$. 

We write the localization of $\shD_X[s]\cM_0\cdot f^{s+k}$ and $j_*(\cM_U[s]\cdot f^s)$ at $m$ by 
\[\shD_X[s]_m\cM_0\cdot f^{s+k}\textup{ and }j_*(\cM_U[s]_m\cdot f^s)\]
respectively for a maximal ideal $m\subseteq \bbC[\bs]$ and by
\[\shD_X(s)\cM_0\cdot f^{s+k}\textup{ and }j_*(\cM_U(s)\cdot f^s)\]
the localization at the generic point.
\end{definition}
\begin{definition}[Duality]
Assume that $\cN$ is a $($left$)$ coherent $\shD_X[s]$-module and $\cN_q$ is a coherent $\shD_X[s]_q$-module for a prime ideal $q\subseteq\bbC[s]$. We then define the duality by 
\[\D(\cN)\coloneqq\mathcal Rhom_{\shD_X[s]}(\cN,\shD_X[s])\otimes_\sO \omega^{-1}_X[n],\]
and 
\[\D(\cN_q)\coloneqq\mathcal Rhom_{\shD_X[s]_q}(\cN_q,\shD_X[s]_q)\otimes_\sO \omega^{-1}_X[n]\]
where $\omega_X$ is the dualizing sheaf of $X$. The twist by $\omega_X$ is to make the dual of $\cN$ $($resp. $\cN_q$$)$ a complex of left $\shD_X[s]$-modules $($resp. $\shD_X[s]_q$-modules$)$.

In the case that $\D(\cN)$ $($resp. $\D(\cN_q)$$)$ has only the zero-th cohomological sheaf non-zero, we also use $\D(\cN)$ $($resp. $\D(\cN_q)$$)$ to denote $\cH^0(\D(\cN))$ $($resp. $\cH^0(\D(\cN_q))$$)$.
\end{definition}
Since the variable $s$ is in the center of $\shD_X[s]$, one can easily check that duality and localization commute, i.e.
\be\label{eq:commDL}
\D(\cN)_q\simeq \D(\cN_q).
\ee

We can evaluate $\cN$ at the residue field of a maximal ideal $m\subseteq \bbC[s]$:
\[\cN\otimes^L_{\bbC[s]}\bbC_m\]
where $\bbC_m\simeq\bbC$ is the residue field $\bbC[s]/m$ and the $\otimes^L_{\bbC[s]}$ denotes the derived tensor functor over $\bbC[s]$; it gives a complex of coherent $\shD_X$-modules. Furthermore, since $\shD_X[s]$ is free over $\bbC[s]$, one can check that evaluation and duality commute, i.e.
\be\label{eq:commED}
\D(\cN)\otimes^L_{\bbC[s]}\bbC_m \simeq \D(\cN\otimes^L_{\bbC[s]}\bbC_m),
\ee
where the second $\D$ denotes the duality functor for complexes of coherent $\shD$-modules. Because of the evaluation functor and its commutativity with duality, we also call $\shD_X[s]$-modules the relative $\shD$-modules over $\bbC[s]$. See \cite[\S 5]{WZ} for further discussions of relative $\shD$-modules for the multi-variate $s$ and also \cite[\S 3]{BVWZ1} in general.

The following lemma is obvious to check; see also \cite[Lemma 5.3.1]{BVWZ2} for a multi-variate version.
\begin{lemma}\label{lm:opendual}
We have 
\[\D(\cM_U[s]\cdot f^s)\simeq \D(\cM_U)[s]\cdot f^{-s}\simeq \D(\cM_U)[s]\cdot f^{s}\]
where the last isomorphism is given by substituting $s$ by $-s$ (and hence it is not canonical).
\end{lemma}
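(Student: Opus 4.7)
The plan is to reduce the computation to the observation that, on the open set $U$ where $f$ is invertible, twisting by $f^s$ is implemented by an automorphism of the sheaf of rings $\shD_U[s]$, and duality intertwines this automorphism with its inverse.

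First, I would introduce the ``conjugation by $f^s$'' automorphism
\[\tau\colon \shD_U[s]\xrightarrow{\sim}\shD_U[s],\qquad \tau(s)=s,\qquad \tau(v)=v+s\cdot v(f)/f,\]
where $v$ is a vector field on $U$; this is well-defined because $v(f)/f$ is a regular function on $U$. By inspection of the twisted action defining the module structure, $\cM_U[s]\cdot f^s$ is exactly $\cM_U[s]$ with $\shD_U[s]$-structure pulled back along $\tau$, and analogously $\cM_U[s]\cdot f^{-s}\simeq (\tau^{-1})^{*}\,\cM_U[s]$.

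Second, I would establish the intertwining identity $\D\circ\tau^{*}\simeq (\tau^{-1})^{*}\circ\D$ for coherent $\shD_U[s]$-modules. Since $\tau$ acts trivially on $\sO_U[s]$, it leaves the dualizing factor $\omega_U^{-1}[n]$ alone, so this reduces to a statement about $\mathcal{R}hom_{\shD_U[s]}(-,\shD_U[s])$: conjugation by $f^s$ acts by $\tau$ on both the left and the right bimodule structure of $\shD_U[s]$, and under $\mathcal{R}hom_{\shD_U[s]}(-,\shD_U[s])$ the left twist is transported onto the output right-module structure, which after converting right back to left via $\omega_U^{-1}$ becomes the inverse twist. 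Applied to $\cN=\cM_U[s]$, together with the trivial flat base change isomorphism $\D(\cM_U[s])\simeq \D(\cM_U)[s]$ in the central variable $s$, this yields the first claimed isomorphism
\[\D(\cM_U[s]\cdot f^s)\;\simeq\;\D(\cM_U)[s]\cdot f^{-s}.\]

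For the second isomorphism, the involution $\iota\colon \bbC[s]\to\bbC[s]$ sending $s\mapsto -s$ extends to an involution of $\shD_U[s]$ fixing $\shD_U$, and under $\iota^{*}$ the rank-one $\shD_U[s]$-module $\sO_U[s]\cdot f^{-s}$ is carried to $\sO_U[s]\cdot f^{s}$; pulling back along $\iota$ therefore gives the non-canonical isomorphism $\D(\cM_U)[s]\cdot f^{-s}\simeq \D(\cM_U)[s]\cdot f^{s}$, as advertised. The only nontrivial ingredient is the intertwining identity $\D\circ\tau^{*}\simeq(\tau^{-1})^{*}\circ\D$; I expect this to be the main obstacle, and I would discharge it either by unpacking the bimodule calculation above by hand or by invoking the relative-duality formalism developed in \cite{WZ, BVWZ1, BVWZ2}. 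Once it is in place, the remaining commutations—with the shift, with $\otimes_{\sO}\omega_U^{-1}$, and with flat base change in $s$—are formal, which is presumably why the authors describe the lemma as ``obvious to check''.
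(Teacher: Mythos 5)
Your argument is correct, and it fills in a proof that the paper itself omits (the paper declares the lemma ``obvious to check'' and points to the multi-variate version in \cite[Lemma 5.3.1]{BVWZ2}): the identification of $\cM_U[s]\cdot f^{\pm s}$ with the pullback of $\cM_U[s]$ along the conjugation automorphism $\tau(v)=v+s\,v(f)/f$ of $\shD_U[s]$ (legitimate because $f$ is invertible on $U$), together with the standard fact that $\D$ intertwines $\tau^{*}$ with $(\tau^{-1})^{*}$ and flat base change in the central variable $s$, is exactly the intended computation. The sign bookkeeping is right as well: the left-module twist transported through $\mathcal Rhom_{\shD_U[s]}(-,\shD_U[s])$ and back through $\omega_U^{-1}$ inverts the twist, yielding $f^{-s}$, and the final non-canonical identification via $s\mapsto -s$ matches the paper's parenthetical remark.
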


\begin{lemma}\label{lm:cm}
The $\shD_X[s]$-module $\shD_X[s]\cM_0\cdot f^{s+k}$ is $n$-Cohen-Macaulay for every $k\in \Z$, i.e. the complex $\D(\shD_X[s]\cM_0\cdot f^{s+k})$ only has one non-zero cohomology sheaf 
\[\cH^0(\D(\shD_X[s]\cM_0\cdot f^{s+k}))\simeq \mathcal Ext^n(\shD_X[s]\cM_0\cdot f^{s+k},\shD_X[s])\otimes_\sO\omega_X^{-1}.\]
\end{lemma}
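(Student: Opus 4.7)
The plan is to reduce Lemma~\ref{lm:cm} to the relative holonomicity of $\cN:=\shD_X[s]\cM_0\cdot f^s$, and then invoke the $n$-Cohen-Macaulay property for relatively holonomic $\shD_X[s]$-modules from the theory of relative $\shD$-modules of Maisonobe \cite{Mai}, refined in \cite{WZ,BVWZ1}.

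\textbf{Reduction to $k=0$.} Since $f$ is invertible on $U$, we have $f^k\cM_0\In j_*\cM_U$ for every $k\in\Z$, and
\[\shD_X[s]\cM_0\cdot f^{s+k}=\shD_X[s](f^k\cM_0)\cdot f^s\]
inside $j_*(\cM_U[s]\cdot f^s)$, with $f^k\cM_0$ a coherent $\sO_X$-submodule of $j_*\cM_U$. Replacing $\cM_0$ by $f^k\cM_0$, it suffices to treat the case $k=0$, i.e.\ to prove that $\cN$ is $n$-Cohen-Macaulay over $\shD_X[s]$.

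\textbf{Relative holonomicity of $\cN$.} Since $\cN\In j_*(\cM_U[s]\cdot f^s)\simeq j_*(\cM_U)\otimes_\bbC\bbC[s]$ embeds in a $\bbC[s]$-flat module, $\cN$ itself is $\bbC[s]$-flat, so $s-s_0$ acts injectively for every $s_0\in\bbC$. The derived specialization $\cN\otimes^L_{\bbC[s]}\bbC_{s_0}$ therefore collapses to the ordinary quotient $\cN/(s-s_0)\cN$, which, after untwisting by $f^{s_0}$, is identified with the coherent $\shD_X$-submodule $\shD_X\cdot\cM_0\In j_*\cM_U$. The latter is holonomic: its characteristic variety lies in the closure in $T^*X$ of $\cha(\cM_U)$, and the hypothesis $\cM_U=\shD_U\cdot\cM_0|_U$ makes this closure Lagrangian of dimension $n$. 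Thus every fibre of $\cN$ over $\A^1_s$ is holonomic; combined with $\bbC[s]$-flatness and the good filtration on $\cN$ induced by $\cM_0\cdot f^s$, the relative characteristic variety in $T^*X\times\A^1_s$ has pure dimension $n+1$, which is precisely the condition of being relatively holonomic in the sense of \cite{Mai,BVWZ1}.

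\textbf{Cohen-Macaulay property.} The ring $\shD_X[s]$ is Auslander-regular, and the grade (the smallest $i$ with $\mathcal Ext^i_{\shD_X[s]}(-,\shD_X[s])\ne 0$) of a coherent $\shD_X[s]$-module equals the codimension of its characteristic cycle in $T^*X\times\A^1_s$. For our $\cN$ this codimension is $(2n+1)-(n+1)=n$, whence $\mathcal Ext^i_{\shD_X[s]}(\cN,\shD_X[s])=0$ for $i<n$; purity of the characteristic cycle upgrades this to vanishing in every degree $i\ne n$. Hence $\mathcal Rhom_{\shD_X[s]}(\cN,\shD_X[s])$ is concentrated in degree $n$, and twisting by $\omega_X^{-1}[n]$ in the definition of $\D(\cN)$ yields the stated description of its unique non-zero cohomology sheaf.

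The principal obstacle will be the \emph{purity} of the relative characteristic variety at the finitely many special values of $s$ where the $b$-function of $\cM_U$ vanishes: the upper bound on fibre dimensions follows immediately from holonomicity of each specialization, but upgrading this to pure dimension $n+1$ along every fibre requires either a direct analysis using a good filtration compatible with the action of $\bbC[s]$, or an appeal to the relative Auslander-Gorenstein formalism developed in \cite[\S 3]{BVWZ1}. Once purity is established, the Cohen-Macaulay conclusion is a purely homological consequence.
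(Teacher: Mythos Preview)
Your overall architecture is the same as the paper's: establish that $\cN=\shD_X[s]\cM_0\cdot f^{s+k}$ is relatively holonomic and $n$-pure, then invoke \cite[Theorem~3.2.2]{BVWZ1} to conclude $n$-Cohen--Macaulayness. The difference is that the paper simply \emph{cites} Maisonobe for both inputs (\cite[R\'esultat~2]{Mai} for relative holonomicity, \cite[Proposition~14]{Mai} for $n$-purity), whereas you attempt to argue them from scratch. Those attempts contain real gaps.

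First, in your relative holonomicity step, the claimed identification of $\cN/(s-s_0)\cN$ with $\shD_X\cdot\cM_0$ (twisted by $f^{s_0}$) is not justified: you only have a surjection onto that module, and the kernel---namely $\cN\cap (s-s_0)\,j_*(\cM_U[s]\cdot f^s)$ modulo $(s-s_0)\cN$---need not vanish without a saturation argument. So holonomicity of the specialization is not established. Second, even granting fibre-wise holonomicity and $\bbC[s]$-flatness, this does \emph{not} yield relative holonomicity in the sense of \cite[Definition~3.2.3]{BVWZ1}: the fibre of the relative characteristic variety can strictly contain the characteristic variety of the fibre, so a fibre-by-fibre dimension count does not give the required structure $\bigcup_i\Lambda_i\times S_i$ with $\Lambda_i$ Lagrangian. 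This is precisely the non-trivial content of \cite[R\'esultat~2]{Mai}.

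Third, in your Cohen--Macaulay step you conflate two distinct notions of purity. Equidimensionality of the characteristic variety (geometric purity) is not the same as $n$-purity in the Auslander sense (every nonzero submodule has grade exactly $n$), and it is the latter that \cite[Theorem~3.2.2]{BVWZ1} requires. Equidimensional support does not imply Cohen--Macaulayness even over commutative rings, so ``purity of the characteristic cycle upgrades this to vanishing in every degree $i\ne n$'' is false as stated. The $n$-purity input is a genuinely separate result, supplied by \cite[Proposition~14]{Mai}, and you correctly flag it as the principal obstacle---but it is an obstacle to your relative-holonomicity argument as well, not just to the final step. The cleanest fix is to do what the paper does: cite Maisonobe for both inputs rather than rederive them.
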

\begin{proof}
By \cite[Proposition 14]{Mai} (taking $p=1$), we see that $\shD_X[s]\cM_0\cdot f^{s+k}$ is $n$-pure (see for instance \cite[\S 4]{BVWZ1} for the definition of purity). Moreover, by \cite[R\'esultat 2]{Mai}, $\shD_X[s]\cM_0\cdot f^{s+k}$ is relative holonomic (see \cite[Definition 3.2.3]{BVWZ1}). By \cite[Theorem 3.2.2]{BVWZ1}, since we have a single $s$, $n$-purity is equivalent to $n$-Cohen-Macaulayness for relative holonomic modules over $\bbC[s]$. The proof is then done.  
\end{proof}
By the above lemma and the isomorphism \eqref{eq:commDL}, we immediately have:
\begin{coro}\label{cor:localcm}
For every prime ideal $q\subseteq \bbC[s]$, the $\shD_X[s]_q$-module $\shD_X[\bs]_q\cM_0\cdot f^{s+k}$ is $n$-Cohen-Macaulay for every $k\in \Z$, i.e. the complex $\D(\shD_X[s]_q\cM_0\cdot f^{s+k})$ only has one non-zero cohomology sheaf 
\[\cH^0(\D(\shD_X[s]_q\cM_0\cdot f^{s+k}))\simeq \mathcal Ext^n(\shD_X[s]_q\cM_0\cdot f^{s+k},\shD_X[s]_q)\otimes_\sO\omega_X^{-1}.\]
\end{coro}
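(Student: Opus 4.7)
The approach is to obtain the corollary as a direct, formal consequence of Lemma \ref{lm:cm} by applying the localization functor $(-)_q = - \otimes_{\bbC[s]} \bbC[s]_q$. The two facts I need are already available: Lemma \ref{lm:cm} establishes the $n$-Cohen-Macaulay property of $\shD_X[s]\cM_0\cdot f^{s+k}$ over $\shD_X[s]$, and the isomorphism \eqref{eq:commDL} asserts that duality commutes with localization in $s$.

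First I would observe that $\bbC[s]_q$ is a flat $\bbC[s]$-module, so that $(-)_q$ is an exact functor on (complexes of) $\shD_X[s]$-modules. Consequently, if a complex has cohomology concentrated in a single degree, the same is true after localization at $q$. By Lemma \ref{lm:cm}, the complex $\D(\shD_X[s]\cM_0\cdot f^{s+k})$ has its only nonzero cohomology sheaf in degree $0$, and the localized complex $\D(\shD_X[s]\cM_0\cdot f^{s+k})_q$ therefore remains concentrated in degree $0$. Then I would apply \eqref{eq:commDL} with $\cN = \shD_X[s]\cM_0\cdot f^{s+k}$ to rewrite this localized complex as $\D(\shD_X[s]_q\cM_0\cdot f^{s+k})$, which yields the $n$-Cohen-Macaulay property over $\shD_X[s]_q$.

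For the explicit Ext description, flatness of $\bbC[s]_q$ over $\bbC[s]$ combined with the coherence inherited from Lemma \ref{lm:cm} gives the compatibility
\[\mathcal{E}xt^n(\shD_X[s]\cM_0\cdot f^{s+k},\shD_X[s]) \otimes_{\bbC[s]} \bbC[s]_q \simeq \mathcal{E}xt^n(\shD_X[s]_q\cM_0\cdot f^{s+k},\shD_X[s]_q),\]
which after twisting by $\omega_X^{-1}$ recovers the stated formula. There is essentially no obstacle here, as the author already signals with the phrase ``we immediately have''; the only thing to track carefully is that flatness lets both duality and the underlying Ext computation pass through the localization.
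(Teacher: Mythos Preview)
Your proposal is correct and follows exactly the paper's approach: the paper simply states that the corollary follows immediately from Lemma~\ref{lm:cm} together with the isomorphism~\eqref{eq:commDL}, and you have spelled out precisely how those two ingredients combine (via flatness of $\bbC[s]_q$ over $\bbC[s]$) to yield the result.
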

The above corollary is the same as \cite[Lemma 2(a) and Corollary 3]{BG}. But our proof (by using Lemma \ref{lm:cm}) is different from the approach in \emph{loc. cit.} See also \cite[\S 5]{WZ} for the multi-variate generalization. 

For every $\alpha\in \bbC$, we denote by $m_\alpha$ the maximal ideal of $\alpha$ in $\bbC[s]$, that is, the ideal generated by $s-\alpha$, and $\bbC_\alpha$ its residue field.
\begin{lemma}\label{lm:bj_*}
We have 
\[\shD_X(s)\cM_0\cdot f^{s-k}=j_*(\cM_U(s)\cdot f^s)\]
for every $k\in \Z$. Moreover, for every $\alpha\in \bbC$, there exists $k_0>0$ so that
\[\shD_X[s]_{m_\alpha}\cM_0\cdot f^{s-k}=j_*(\cM_U[s]_{m_\alpha}\cdot f^s)\]
for all $k>k_0$. 
\begin{proof}
We write by $b(s)$ the $b$-function of $\cM_U$. Since $b(s+k)$ is invertible in $\bbC(s)$, we have $\shD_X(s)\cM_0\cdot f^{s+k}=\shD_X(s)\cM_0\cdot f^{s}$ for every $k\in\Z$. Hence, the first statement follows. The second statement can be proved similarly. 
\end{proof}
\end{lemma}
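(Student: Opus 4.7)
The plan is to derive both equalities from a single mechanism: the defining inclusion $b(s)\cdot\shD_X[s]\cM_0\cdot f^s\subseteq\shD_X[s]\cM_0\cdot f^{s+1}$ is formally invariant under integer shifts of $s$. Since the relation is polynomial in $s$ and $f$ is invertible on $U$, substituting $s\mapsto s+k$ yields
\[
b(s+k)\cdot\shD_X[s]\cM_0\cdot f^{s+k}\subseteq\shD_X[s]\cM_0\cdot f^{s+k+1}
\]
inside $j_*(\cM_U[s]\cdot f^s)$ for every $k\in\Z$. Iterating produces, for $l\le k$,
\[
\prod_{j=l}^{k-1}b(s+j)\cdot\shD_X[s]\cM_0\cdot f^{s+l}\subseteq\shD_X[s]\cM_0\cdot f^{s+k},
\]
while the opposite inclusion $\shD_X[s]\cM_0\cdot f^{s+k}\subseteq\shD_X[s]\cM_0\cdot f^{s+l}$ is the monotonicity already noted in the text.

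For the first equality I pass to the generic point of $\bbC[s]$. Every $b(s+j)$ is a non-zero element of $\bbC(s)$, hence a unit, so the two inclusions collapse into the shift invariance $\shD_X(s)\cM_0\cdot f^{s+k}=\shD_X(s)\cM_0\cdot f^s$ for every $k\in\Z$. Combined with the local exhaustion $j_*(\cM_U(s)\cdot f^s)=\bigcup_{k\ge 0}\shD_X(s)\cM_0\cdot f^{s-k}$, which is a restatement of $\cM_U=\shD_U\cdot\cM_0|_U$ together with the finiteness of pole order along $D$ of any section of $j_*\cM_U$, this proves the first statement.

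For the second equality I localize at $m_\alpha$ instead. The scalar $b(\alpha+j)$ vanishes for only finitely many $j\in\Z$, so I choose $k_0>0$ strictly larger than every positive integer $j$ with $b(\alpha-j)=0$. Then for any $k>k_0$ and $l>k$, every factor of $\prod_{j=k+1}^{l}b(s-j)$ is a unit in $\bbC[s]_{m_\alpha}$, which upgrades the iterated inclusion to the equality $\shD_X[s]_{m_\alpha}\cM_0\cdot f^{s-l}=\shD_X[s]_{m_\alpha}\cM_0\cdot f^{s-k}$. Hence the increasing chain $\{\shD_X[s]_{m_\alpha}\cM_0\cdot f^{s-l}\}_{l\ge 0}$ stabilizes for $l>k_0$, and the same exhaustion, localized at $m_\alpha$, delivers the desired equality for all $k>k_0$.

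The one genuinely non-formal input is the exhaustion $j_*(\cM_U[s]\cdot f^s)=\bigcup_{k}\shD_X[s]\cM_0\cdot f^{s-k}$ (and its localizations), which is where I expect to spend most care: locally a section of $j_*\cM_U$ is obtained from a section of $\cM_0$ by a differential operator on $U$, which one can present as $f^{-k}$ times an operator in $\shD_X$ by clearing denominators along $D$. Once this is granted, everything else is a direct manipulation of the $b$-function relation.
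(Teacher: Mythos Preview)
Your argument is correct and follows the same route as the paper: use the shifted $b$-function relation to show that the chain $\{\shD_X[s]\cM_0\cdot f^{s-k}\}_k$ becomes stationary after localizing at the generic point (resp.\ at $m_\alpha$ for $k$ large), and then identify the stable value with $j_*(\cM_U[s]\cdot f^s)$ after localization. The paper's proof compresses all of this into two sentences and leaves the exhaustion $j_*(\cM_U[s]\cdot f^s)=\bigcup_k \shD_X[s]\cM_0\cdot f^{s-k}$ implicit; you spell it out, which is an improvement rather than a deviation.
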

We define $j_!$-extensions
\[j_!(\cM_U(s)\cdot f^s)\coloneqq \D\circ j_*\circ \D(\cM(s)\cdot f^s)\]
and 
\[j_!(\cM_U[s]_{m_\alpha}\cdot f^s)\coloneqq \D\circ j_*\circ \D(\cM[s]_{m_\alpha}\cdot f^s)\]
for every $\alpha\in \bbC$. By Lemma \ref{lm:opendual}, Corollary \ref{cor:localcm} and Lemma \ref{lm:bj_*} (both for $\D(\cM_U)$), they are both sheaves (instead of complexes).

Since $\D\circ\D$ is identity, using the adjunction pair $(j^{-1}, j_*)$, we have natural morphisms  
\[j_!(\cM_U(s)\cdot f^s)\rightarrow j_*(\cM_U(s)\cdot f^s)\]
and 
\[j_!(\cM_U[s]_{m_\alpha}\cdot f^s)\rightarrow j_*(\cM_U[s]_{m_\alpha}\cdot f^s)\]
for every $\alpha\in \bbC$.

For every $\alpha\in \bbC$, the multi-valued function $f^\alpha$ gives a local system on $U$. We then denote by $\cM_U\cdot f^\alpha$ the holonomic $\shD_U$-module twisted by the local system given by $f^\alpha$. It is obvious by construction that $\cM_U\cdot f^\alpha$ is $\Z$-periodic, that is, 
\be\label{eq:twistperiodic}
\cM_U\cdot f^\alpha=\cM_U\cdot f^{\alpha+k}
\ee
for every $k\in \Z$.
\begin{example}
For some $\alpha\in \bbC$, consider the regular holonomic module 
\[\bbC[t,1/t]\cdot t^\alpha\]
by assigning $t\partial_t\cdot t^\alpha=\alpha t^\alpha$, where $t$ is the complex coordinate of the complex plane $\bbC$ and $t^\alpha$ is the symbol of the multivalued function ``$t^\alpha$". Then the multi-valued flat section on $\bbC^*$, the punctured complex plane, is $e^{-\alpha\log t}\cdot t^\alpha$. Consequently, the monodromy $T$ of the underlying rank 1 local system (around the origin counterclockwise) is the multiplication by $e^{-2\pi\sqrt{-1}\alpha}$, by choosing different branches of $\log t$.
\end{example}

By using the Deligne-Goresky-MacPherson extension (or the minimal extension), the following theorem is first proved by Ginsburg in \cite[\S 3.6 and 3.8]{Gil}, as well as in \cite{BG}, which is essentially due to Beilinson and Bernstein. See also \cite[Theorem 5.3]{WZ} for the multi-variate generalization.
\begin{theorem}[Beilinson and Bernstein]\label{thm:BB}
We have: 
\begin{enumerate}
    \item the natural morphism $j_!(\cM_U(s)\cdot f^s)\rightarrow j_*(\cM_U(s)\cdot f^s)$ is isomorphic and they are both equal to $\shD_X(s)\cM_0\cdot f^{s+k}$ for every $k\in \Z$;
    \item the natural morphism $j_!(\cM_U[s]_{m_\alpha}\cdot f^s)\rightarrow j_*(\cM_U[s]_{m_\alpha}\cdot f^s)$ is injective for every $\alpha\in \bbC$;
    \item for every $\alpha\in \bbC$, there exists $k_0\in \Z_{+}$ so that for all $k>k_0$ we have 
    \[j_*(\cM_U[s]_{m_\alpha}\cdot f^s)=\shD_X[s]_{m_\alpha}\cM_0\cdot f^{s-k},\]
    \[j_!(\cM_U[s]_{m_\alpha}\cdot f^s)=\shD_X[s]_{m_\alpha}\cM_0\cdot f^{s+k},\]
    \[j_*(\cM_U\cdot f^\alpha)\stackrel{q.i.}{\simeq}\shD_X[s]_{m_\alpha}\cM_0\cdot f^{s-k}\otimes^L_{\bbC[\bs]} \bbC_\alpha \]
    and 
    \[j_!(\cM_U\cdot f^\alpha)\stackrel{q.i.}{\simeq}\shD_X[s]_{m_\alpha}\cM_0\cdot f^{s+k}\otimes^L_{\bbC[\bs]} \bbC_\alpha;\]
    \item for every $\alpha\in \bbC$, if $\alpha+k$ is not a root of the $b$-function of $\cM_U$ for every $k\in \Z$, then we have 
    \[j_!(\cM_U[s]_{m_\alpha}\cdot f^s)= j_*(\cM_U[s]_{m_\alpha}\cdot f^s)=\shD_X[s]_{m_\alpha}\cM_0\cdot f^{s+k}\]
    and 
    \[j_!(\cM_U\cdot f^\alpha)= j_*(\cM_U\cdot f^\alpha)\stackrel{q.i.}{\simeq} \shD_X[s]_{m_\alpha}\cM_0\cdot f^{s+k}\otimes^L_{\bbC[s]} \bbC_\alpha \]
    for every $k\in \Z$, where $q.i.$ stands for quasi-isomorphism.
\end{enumerate}
\end{theorem}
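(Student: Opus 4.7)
The plan is to push everything through the chain of submodules $N_k := \shD_X[s]\cM_0 \cdot f^{s+k}$ of $j_*(\cM_U[s]\cdot f^s)$, with $N_{k+1}\subseteq N_k$ and $N_k/N_{k+1}$ annihilated by $b(s+k)$. Lemma \ref{lm:bj_*} will identify $j_*$ after localization, and duality (via Lemma \ref{lm:opendual} together with Corollary \ref{cor:localcm}) will identify $j_!$.

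For (1), observe that $b(s+k)$ is a unit in $\bbC(s)$ for every $k\in\Z$, so $N_k\otimes_{\bbC[s]}\bbC(s)$ is independent of $k$. Lemma \ref{lm:bj_*} identifies this common submodule with $j_*(\cM_U(s)\cdot f^s)$. Applying the same reasoning to $\D(\cM_U)$ with any fixed coherent generator, and then dualizing via Lemma \ref{lm:opendual} and Corollary \ref{cor:localcm} (so that the dual remains a sheaf), gives a coherent description of $j_!(\cM_U(s)\cdot f^s)$. The natural morphism $j_!\to j_*$ is an isomorphism on $U$, so its cone $C$ is supported on $D$. Multiplication by $f$ is bijective on $j_*$ (the shift $N_k^{(s)}\xrightarrow{f} N_{k+1}^{(s)}=N_k^{(s)}$ is an iso, and $j_*$-type modules have no $f$-torsion) and, by duality, bijective on $j_!$ as well; hence $f$ is bijective on $C$. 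But a coherent $\shD$-module supported on $D$ is locally $f$-nilpotent by Kashiwara's lemma, forcing $C=0$. Thus $j_!=j_* = \shD_X(s)\cM_0\cdot f^{s+k}$ for every $k$.

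For (2)--(4), I would repeat the analysis in $\bbC[s]_{m_\alpha}$. Here $b(s+k)$ is a unit in $\bbC[s]_{m_\alpha}$ exactly when $\alpha+k$ is not a root of $b(s)$; since the root set is finite, one picks $k_0>0$ so large that $\alpha\pm k$ avoids it for every $k>k_0$. Then $N_k^{m_\alpha}:=\shD_X[s]_{m_\alpha}\cM_0\cdot f^{s+k}$ stabilizes for $k\geq k_0$ and, separately, for $k\leq -k_0$. Lemma \ref{lm:bj_*} identifies the downward stabilization with $j_*(\cM_U[s]_{m_\alpha}\cdot f^s)$, and dualizing Lemma \ref{lm:bj_*} applied to $\D(\cM_U)$ identifies the upward stabilization with $j_!(\cM_U[s]_{m_\alpha}\cdot f^s)$. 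The natural morphism $j_!\to j_*$ then becomes the inclusion $N_k^{m_\alpha}\hookrightarrow N_{-k}^{m_\alpha}$ for $k>k_0$, which is injective; this gives (2) together with the first two formulas of (3). Part (4) is immediate: if no $\alpha+k$ is a root, the chain $N_k^{m_\alpha}$ collapses to a single module and $j_!=j_*$. The remaining formulas in (3) follow from $\otimes^L_{\bbC[s]}\bbC_\alpha$-evaluation, which commutes with $\D$ by \eqref{eq:commED} and hence with $j_!$, and which sends $\cM_U[s]_{m_\alpha}\cdot f^s$ to $\cM_U\cdot f^\alpha$ in view of \eqref{eq:twistperiodic}.

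The main obstacle I anticipate is identifying $j_!(\cM_U[s]_{m_\alpha}\cdot f^s)$ with $\shD_X[s]_{m_\alpha}\cM_0\cdot f^{s+k}$ on the nose: the construction $j_!=\D j_*\D$ naturally yields $\D(\shD_X[s]_{m_\alpha}\cN_0\cdot f^{s+k})$ for some coherent generator $\cN_0$ of $j_*\D(\cM_U)$, rather than the submodule of $j_*(\cM_U[s]_{m_\alpha}\cdot f^s)$ generated by $\cM_0\cdot f^{s+k}$. In the generic $\bbC(s)$ case the cone-plus-nilpotence argument hides this, but at $m_\alpha$ one must match two genuinely different-looking submodules. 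The cleanest resolution, in the spirit of Ginsburg and Beilinson-Bernstein, is to characterize both sides via the universal property of the minimal (Deligne-Goresky-MacPherson) extension in the abelian category of relative holonomic $\shD_X[s]_{m_\alpha}$-modules, the existence of which is underwritten by Corollary \ref{cor:localcm}.
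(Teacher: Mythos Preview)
The paper does not supply its own proof of Theorem \ref{thm:BB}: it is stated with attribution to Ginsburg \cite[\S 3.6 and 3.8]{Gil} and to \cite{BG}, together with the remark that the argument proceeds ``by using the Deligne--Goresky--MacPherson extension (or the minimal extension).'' So there is no detailed proof in the paper to compare against, only that one-line indication of method.

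Your sketch is correct and lands on precisely that ingredient. The reductions you outline for (1), (3), and (4) via the $b$-function and Lemma \ref{lm:bj_*} are the standard ones, and your self-diagnosed obstacle---that $\D j_*\D$ applied with an auxiliary generator $\cN_0$ of $j_*\D(\cM_U)$ does not a priori return the submodule $\shD_X[s]_{m_\alpha}\cM_0\cdot f^{s+k}$---is exactly the point where the cited proofs invoke the minimal extension. In that framework one shows that $\shD_X[s]_{m_\alpha}\cM_0\cdot f^{s+k}$ for $k\gg 0$ has no quotient supported on $D$ (this is what Cohen--Macaulayness in Corollary \ref{cor:localcm} buys), hence is the image of $j_!\to j_*$, and then the stabilization forces it to equal $j_!$ itself. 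Your final paragraph is the right resolution and matches the paper's pointer.

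One small addition worth noting: your cone argument for part (1)---that the cokernel of $j_!\to j_*$ over $\bbC(s)$ is supported on $D$ yet has $f$ acting bijectively, hence vanishes---is a clean way to handle the generic case without naming the minimal extension explicitly. It does not, however, extend directly to the localized case at $m_\alpha$, since there $f$ is no longer invertible on the relevant modules; so you are right that (2) and the $j_!$-half of (3) genuinely require the minimal-extension characterization, as the references indicate.
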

\subsection{Nearby and vanishing cycles}\label{subsec:nbvan}
We now give constructions of nearby and vanishing cycles. We continue using the notations and setups in \S\ref{subsec:bf}. We assume that $\cM$ is a holonomic $\shD_X$-module so that $\cM|_U\simeq\cM_U$.
\begin{definition}
For every $\alpha\in \bbC$, the $\alpha$-nearby cycle of $\cM$ is 
\[\nearbyd{\cM}{\alpha}\simeq\nearbyd{\cM_U}{\alpha}=\dfrac{j_*(\cM_U[s]_{m_{-\alpha}}\cdot f^s)}{j_!(\cM_U[s]_{m_{-\alpha}}\cdot f^s)}.\]
\end{definition}
The above definition needs Theorem \ref{thm:BB} (2) to get the quotient. From definition, the $\alpha$-nearby cycle of $\cM$ only depends on $\cM|_U$. 

Recall that $\shD_X[s]\cM_0\cdot f^s$ has a $t$-action given by 
\[t\cdot s=s+1.\]
By definition, $t$ acts on $\nearbyd{\cM}{\alpha}$ and 
\be\label{eq:tactnb}
t\cdot \nearbyd{\cM}{\alpha}=\nearbyd{\cM}{\alpha+1}.
\ee

We define $\Lambda$ by the discrete set: $\Z-$roots of the $b$-function of $\cM_U$. By Eq.\eqref{eq:twistperiodic}, $\Lambda$ is independent of choices of $\cM_0$.
By Theorem \ref{thm:BB} (4), we see that 
\[\nearbyd{\cM_U}{\alpha}\not= 0 \textup{ iff } \alpha\in \Lambda.\]
When $\alpha=0$, $\nearbyd{\cM_U}{0}$ coincides with $\Psi^{\textup{nil}}(\cM)$ in \cite[\S 2.4]{BG}.
\begin{prop}\label{prop:poneaby}
For every $\alpha\in\bbC$, we have
\begin{enumerate}
    \item $(s+\alpha)^N$ annihilates $\nearbyd{\cM_U}{\alpha}$ for some $N\gg 0$.
    \item $\nearbyd{\cM_U}{\alpha}$ is holonomic $\shD_X$-module supported on $D$; moreover, if $\cM_U$ is regular holonomic, then so is $\nearbyd{\cM_U}{\alpha}$;
    \item $\D(\nearbyd{\cM_U}{\alpha})\simeq \nearbyd{\D(\cM_U)}{-\alpha}$.
\end{enumerate}
\end{prop}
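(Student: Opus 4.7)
For part (1), the plan is to iterate the defining property of the $b$-function. Since $b(s)$ annihilates $\shD_X[s]\cM_0\cdot f^s/\shD_X[s]\cM_0\cdot f^{s+1}$, shifting by $s\mapsto s-i$ shows that each $b(s-i)$ annihilates the corresponding step of the filtration, so the product $\prod_{j=-k}^{k-1} b(s+j)$ annihilates $\shD_X[s]\cM_0\cdot f^{s-k}/\shD_X[s]\cM_0\cdot f^{s+k}$. After localizing at $m_{-\alpha}$, every factor $b(s+j)$ with $b(-\alpha+j)\neq 0$ becomes a unit and disappears, while the remaining factors contribute a positive power of $(s+\alpha)$. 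For $k$ sufficiently large this gives $(s+\alpha)^N\cdot\nearbyd{\cM_U}{\alpha}=0$ for some $N\gg 0$.

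For part (2), coherence over $\shD_X$ follows immediately from (1): the quotient $\nearbyd{\cM_U}{\alpha}$ is coherent over $\shD_X[s]_{m_{-\alpha}}$, hence over $\shD_X[s]/(s+\alpha)^N$, which is free of rank $N$ over $\shD_X$. The support statement follows because $f$ is invertible on $U$, so all of the submodules $\shD_X[s]\cM_0\cdot f^{s\pm k}$ coincide on $U$ and the quotient vanishes there. For holonomicity, I would consider the $(s+\alpha)$-adic filtration: tensoring the defining short exact sequence with $\bbC_{-\alpha}$ over $\bbC[s]_{m_{-\alpha}}$ yields a long exact sequence whose outer terms are quasi-isomorphic, by Theorem \ref{thm:BB}(3), to the holonomic modules $j_!(\cM_U\cdot f^{-\alpha})$ and $j_*(\cM_U\cdot f^{-\alpha})$. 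Hence the cokernel $\nearbyd{\cM_U}{\alpha}/(s+\alpha)\nearbyd{\cM_U}{\alpha}$ is $\shD_X$-holonomic; each higher graded piece is a quotient of it via multiplication by $(s+\alpha)^i$, and the iterated extension yields $\shD_X$-holonomicity for $\nearbyd{\cM_U}{\alpha}$ itself. Since $j_*$ and twisting by $f^{-\alpha}$ preserve regular holonomicity, the same argument covers the regular case.

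For part (3), the plan is to apply $\D$ to the defining short exact sequence
\[0\to j_!(\cM_U[s]_{m_{-\alpha}}\cdot f^s)\to j_*(\cM_U[s]_{m_{-\alpha}}\cdot f^s)\to \nearbyd{\cM_U}{\alpha}\to 0\]
and track how $\D$ intertwines the relevant functors. Directly from $j_!=\D\circ j_*\circ \D$ we get $\D\circ j_!=j_*\circ \D$ and $\D\circ j_*=j_!\circ \D$; by Lemma \ref{lm:opendual}, $\D$ interchanges $\cM_U[s]\cdot f^s$ with $\D(\cM_U)[s]\cdot f^s$ via the substitution $s\mapsto -s$, which swaps $m_{-\alpha}$ with $m_\alpha$. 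Both outer terms are $n$-Cohen-Macaulay by Corollary \ref{cor:localcm}, so their duals sit in a single cohomological degree, and the induced morphism $j_!(\D(\cM_U)[s]_{m_\alpha}\cdot f^s)\to j_*(\D(\cM_U)[s]_{m_\alpha}\cdot f^s)$ is injective by Theorem \ref{thm:BB}(2) applied to $\D(\cM_U)$. The resulting long exact sequence identifies $\D(\nearbyd{\cM_U}{\alpha})$, concentrated in a single degree in the sense of the paper's duality convention, with the cokernel $\nearbyd{\D(\cM_U)}{-\alpha}$.

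The most delicate step is the holonomicity claim in (2). The coherent $\shD_X$-structure and the support statement are formal consequences of (1), but upgrading to Lagrangian characteristic variety genuinely uses Maisonobe's relative holonomic framework together with the specialization statement of Theorem \ref{thm:BB}(3). Once that input is granted, parts (1) and (3) reduce to the $b$-function computation and to bookkeeping in the six-functor formalism respectively.
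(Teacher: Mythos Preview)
Your proof is correct and follows essentially the same route as the paper. Part (1) is identical in spirit: iterate the $b$-function relation and localize at $m_{-\alpha}$. Part (3) is the paper's argument spelled out in more detail; the paper simply cites Lemma~\ref{lm:opendual} and \eqref{eq:commDL}, whereas you trace the long exact sequence and invoke Theorem~\ref{thm:BB}(2) for injectivity, which is exactly what those citations encode.

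The only organizational difference is in part (2). The paper filters the ambient module $j_*(\cM_U[s]_{m_\alpha}\cdot f^s)/(s-\alpha)^N$ by powers of $(s-\alpha)$, observes via Theorem~\ref{thm:BB}(3) that each graded piece is $j_*(\cM_U\cdot f^\alpha)$, and then notes that $\nearbyd{\cM_U}{-\alpha}$ is a quotient of this holonomic module. You instead filter $\nearbyd{\cM_U}{\alpha}$ itself by $(s+\alpha)^i$, identify the bottom graded piece as a quotient of $j_*(\cM_U\cdot f^{-\alpha})$ by tensoring the defining short exact sequence with $\bbC_{-\alpha}$, and bound the higher pieces as quotients of the bottom one. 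Both arguments are correct; the paper's is marginally cleaner because it avoids the $\mathrm{Tor}_1$ term in your long exact sequence, but yours is also fine since that term plays no role in bounding the cokernel. Your closing remark slightly overstates the dependence on Maisonobe for holonomicity: that input is really packaged in Theorem~\ref{thm:BB}(3) (and in the Cohen--Macaulayness needed for part (3)), whereas the holonomicity step itself only needs that $j_*(\cM_U\cdot f^{-\alpha})$ is holonomic.
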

\begin{proof}
This proposition is essentially proved in \cite[\S 4.2]{BB93}. We give a proof here for completeness. 

By Theorem \ref{thm:BB} (3), we have 
\[\nearbyd{\cM_U}{-\alpha}=\dfrac{\shD_X[s]_{m_\alpha}\cM_0\cdot f^{s-k}}{\shD_X[s]_{m_\alpha}\cM_0\cdot f^{s+k}}\]
Therefore, $(s-\alpha)^N$ annihilates $\nearbyd{\cM_U}{-\alpha}$ for some $N\gg 0$ by using the $b$-function of $\cM_U$. The first statement is thus proved. 

For the second one, it is obvious that $\nearbyd{\cM_U}{\alpha}$ is supported on $D$. We then prove holonomicity. Using Theorem \ref{thm:BB} (3) one more time, we obtain a short exact sequence
\[0\to j_*(\cM_U\cdot f^\alpha)\rightarrow\dfrac{j_*(\cM_U[s]_{m_\alpha}\cdot f^s)}{(s-\alpha)^2\cdot j_*(\cM_U[s]_{m_\alpha}\cdot f^s)}\to  j_*(\cM_U\cdot f^\alpha)\to0.\]
Hence, $\dfrac{j_*(\cM_U[s]_{m_\alpha}\cdot f^s)}{(s-\alpha)^2\cdot j_*(\cM_U[s]_{m_\alpha}\cdot f^s)}$ is holonomic. By induction, we then have $\dfrac{j_*(\cM_U[s]_{m_\alpha}\cdot f^s)}{(s-\alpha)^N\cdot j_*(\cM_U[s]_{m_\alpha}\cdot f^s)}$ is holonomic. 

By Part (1), we have
\[\nearbyd{\cM_U}{-\alpha}=\nearbyd{\cM_U}{\alpha}/(s-\alpha)^N\cdot\nearbyd{\cM_U}{\alpha}.\]
Therefore, $\nearbyd{\cM_U}{-\alpha}$ is a quotient of $\dfrac{j_*(\cM_U[s]_{m_\alpha}\cdot f^s)}{(s-\alpha)^N\cdot j_*(\cM_U[s]_{m_\alpha}\cdot f^s)}$, from which we have proved the holonomicity. Regularity can be proved similarly.

The third one follows from Lemma \ref{lm:opendual} and Eq.\eqref{eq:commDL}. 
\end{proof}

We now give an alternative description of $\alpha$-nearby cycle. 
\begin{prop}\label{prop:altnb}
For each $\alpha\in\bbC$, $\nearbyd{\cM_U}{\alpha}$ is canonically isomorphic to the generalized $-\alpha$-eigenspace of $\shD_X[s]\cM_0\cdot f^{s-k}/\shD_X[s]\cM_0\cdot f^{s-k}$ with respect to the $s$-action for $k\gg 0$.
\end{prop}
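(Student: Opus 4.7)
The plan is to identify $\nearbyd{\cM_U}{\alpha}$ with the localization of
\[Q_k\coloneqq \shD_X[s]\cM_0\cdot f^{s-k}\big/\shD_X[s]\cM_0\cdot f^{s+k}\]
at the maximal ideal $m_{-\alpha}\subset\bbC[s]$, and then to observe that this localization coincides with the generalized $(-\alpha)$-eigenspace of $s$ on $Q_k$. First, I would invoke Theorem \ref{thm:BB}(3): for $k$ sufficiently large (the bound depending on $\alpha$) one has
\[j_*(\cM_U[s]_{m_{-\alpha}}\cdot f^s)=\shD_X[s]_{m_{-\alpha}}\cM_0\cdot f^{s-k}, \qquad j_!(\cM_U[s]_{m_{-\alpha}}\cdot f^s)=\shD_X[s]_{m_{-\alpha}}\cM_0\cdot f^{s+k}.\]
Since localization at $m_{-\alpha}$ is exact and $s$ is central in $\shD_X[s]$, the quotient defining $\nearbyd{\cM_U}{\alpha}$ is naturally identified with $(Q_k)_{m_{-\alpha}}=Q_k\otimes_{\bbC[s]}\bbC[s]_{m_{-\alpha}}$.

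Second, I would verify that $Q_k$ is $\bbC[s]$-torsion, with annihilator a polynomial whose roots form a finite subset of $\bbC$. This follows by iterating the defining property of the $b$-function $b(s)$: substituting $s$ by an integer shift of itself in the relation $b(s)\cdot(\cM_0\cdot f^s)\subset\shD_X[s]\cM_0\cdot f^{s+1}$ produces, for each $j\in\Z$, an annihilation of the successive graded piece $\shD_X[s]\cM_0\cdot f^{s+j}\big/\shD_X[s]\cM_0\cdot f^{s+j+1}$ by an appropriate shift of $b$. Taking the product over $-k\le j<k$ yields a polynomial $B_k(s)\in\bbC[s]$ annihilating $Q_k$, all of whose roots are $\Z$-translates of roots of $b(s)$.

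Third, since $B_k(s)$ has only finitely many roots, the structure theorem for torsion modules over $\bbC[s]$ gives a canonical $\shD_X[s]$-equivariant decomposition $Q_k=\bigoplus_{\beta}Q_k^{(\beta)}$ into generalized eigenspaces $Q_k^{(\beta)}=\{q\in Q_k:(s-\beta)^Nq=0\text{ for some }N\}$. Localizing at $m_{-\alpha}$ annihilates every summand except $Q_k^{(-\alpha)}$, giving the desired canonical isomorphism $\nearbyd{\cM_U}{\alpha}\simeq Q_k^{(-\alpha)}$. A final routine check is that enlarging $k$ beyond the threshold of Theorem \ref{thm:BB}(3) does not alter the $(-\alpha)$-summand, since the additional factors in $B_k(s)$ introduce no new $(-\alpha)$-torsion.

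The main step requiring care is the second paragraph: one has to pin down the correct shift convention so that the annihilation of each graded piece is compatible with the non-vanishing criterion $\nearbyd{\cM_U}{\alpha}\ne 0\iff\alpha\in\Lambda$ recorded just before the proposition, and one has to justify the substitution $s\mapsto s+j$ at the level of the $\shD_X[s]$-module structure on $\cM_0\cdot f^{s+j}$. Once this bookkeeping is in place, the rest is formal: exactness of localization, commutativity of duality and localization (Eq.\eqref{eq:commDL}), and the Chinese remainder decomposition of a finitely-supported $\bbC[s]$-module.
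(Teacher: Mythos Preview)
Your proposal is correct and follows essentially the same route as the paper's own proof: use the $b$-function to see that $Q_k$ is a $\bbC[s]$-module supported on a finite set (hence its generalized $(-\alpha)$-eigenspace is canonically $(Q_k)_{m_{-\alpha}}$), and then invoke Theorem~\ref{thm:BB}(3) to identify $(Q_k)_{m_{-\alpha}}$ with $\nearbyd{\cM_U}{\alpha}$. The only differences are cosmetic---you run the two identifications in the opposite order and add some bookkeeping about shift conventions and independence of $k$---and your passing mention of Eq.~\eqref{eq:commDL} is not actually needed here.
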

\begin{proof}
Using the $b$-function, we first know that the $s$-action on $\shD_X[s]\cM_0\cdot f^{s-k}/\shD_X[s]\cM_0\cdot f^{s-k}$ admits a minimal polynomial for each $k\ge0$. Hence, we have the generalized $\alpha$-eigenspace. Since as a $\bbC[s]$-module, $\shD_X[s]\cM_0\cdot f^{s-k}/\shD_X[s]\cM_0\cdot f^{s-k}$ is supported at a finite subset of $\Spec\bbC[s]$ (determined by the $b$-function). Therefore, its $\alpha$-eigenspace is naturally 
\[(\shD_X[s]\cM_0\cdot f^{s-k}/\shD_X[s]\cM_0\cdot f^{s-k})_{m_{\alpha}}.\]
The proof is now done by Theorem \ref{thm:BB}(3).
\end{proof}

We write by $b_f(s)$ the $b$-function for $\sO_U$ along $f$. Since $\D(\sO_U)\simeq \sO_U$, we have the following well-known fact as an immediate corollary of Proposition \ref{prop:poneaby} (2):
\begin{coro}
If $\alpha$ is a root of $b_f(s)$, then $-\alpha+k$ is also a root of $b_f(s)$ for some $k\in \Z$.
\end{coro}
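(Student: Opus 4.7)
The corollary is essentially the statement that the set $\Lambda$ of $\Z$-roots of $b_f(s)$ is stable under $\alpha \mapsto -\alpha$ (modulo $\Z$). My plan is to exploit the duality statement in Proposition \ref{prop:poneaby}(3) together with the self-duality of $\sO_U$, using the characterization of nonvanishing of nearby cycles in terms of $\Lambda$.

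First I would specialize to $\cM_U = \sO_U$ and recall that $\D(\sO_U) \simeq \sO_U$. By the remark immediately following Proposition \ref{prop:poneaby} (which invokes Theorem \ref{thm:BB}(4)), the module $\nearbyd{\sO_U}{\alpha}$ is nonzero precisely when $\alpha \in \Lambda$, that is, precisely when some $\Z$-translate of $\alpha$ is a root of $b_f(s)$. In particular, if $\alpha$ itself is a root of $b_f(s)$, then $\nearbyd{\sO_U}{\alpha} \neq 0$.

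Next I would apply Proposition \ref{prop:poneaby}(3) to transport this nonvanishing under the duality $\alpha \mapsto -\alpha$:
\[
\D\bigl(\nearbyd{\sO_U}{\alpha}\bigr) \;\simeq\; \nearbyd{\D(\sO_U)}{-\alpha} \;\simeq\; \nearbyd{\sO_U}{-\alpha}.
\]
By Proposition \ref{prop:poneaby}(2), $\nearbyd{\sO_U}{\alpha}$ is holonomic, so its holonomic dual is nonzero iff it is itself nonzero. Hence $\nearbyd{\sO_U}{-\alpha} \neq 0$, which by the same characterization means $-\alpha \in \Lambda$, i.e.\ $-\alpha + k$ is a root of $b_f(s)$ for some $k \in \Z$, which is the claim.

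There is no real obstacle: given the framework that has been set up, this is essentially a one-line consequence of combining the duality isomorphism for nearby cycles with the self-duality $\D(\sO_U) \simeq \sO_U$. The only conceptual point worth emphasizing is that duality preserves nonvanishing on the holonomic level, so that nonvanishing of $\nearbyd{\sO_U}{\alpha}$ forces nonvanishing of $\nearbyd{\sO_U}{-\alpha}$; this is precisely the symmetry $\Lambda = -\Lambda$ that the corollary asserts.
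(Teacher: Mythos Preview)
Your approach is exactly the paper's: combine the duality isomorphism $\D(\nearbyd{\sO_U}{\alpha})\simeq \nearbyd{\sO_U}{-\alpha}$ from Proposition~\ref{prop:poneaby}(3) with the self-duality $\D(\sO_U)\simeq\sO_U$ and the nonvanishing criterion in terms of $\Lambda$. (The paper cites part~(2), but the duality in part~(3) is what is actually being invoked.)

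There is, however, a sign slip in your bookkeeping that makes the argument as written circular. Recall $\Lambda=\Z-\{\text{roots of }b_f\}$ and the nearby cycle $\nearbyd{\sO_U}{\alpha}$ is defined via localization at $m_{-\alpha}$; hence by Theorem~\ref{thm:BB}(4) one has $\nearbyd{\sO_U}{\alpha}\neq 0$ iff some $\Z$-translate of $-\alpha$ (not of $\alpha$) is a root. So from ``$\alpha$ is a root'' you get immediately $\nearbyd{\sO_U}{-\alpha}\neq 0$, not $\nearbyd{\sO_U}{\alpha}\neq 0$; the latter is precisely equivalent to the conclusion you are trying to prove. The fix is just to swap the roles of $\alpha$ and $-\alpha$: start from $\nearbyd{\sO_U}{-\alpha}\neq 0$, apply duality to obtain $\nearbyd{\sO_U}{\alpha}\simeq\D(\nearbyd{\sO_U}{-\alpha})\neq 0$, and then read off that $-\alpha+k$ is a root for some $k\in\Z$. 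With this correction your proof is complete and identical in spirit to the paper's.
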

\begin{definition}[Beilinson]
The maximal extension of $\cM_U$ is 
\[\Xi(\cM_U)=\dfrac{j_*(\cM_U[s]_{m_0}\cdot f^s)}{s\cdot j_!(\cM_U[s]_{m_0}\cdot f^s)}.\]
\end{definition}
Using Theorem \ref{thm:BB} (3) with $\alpha=0$, we then have the following two short exact sequences
\be\label{eq:b-}
0\to j_!(\cM_U)\xrightarrow{\alpha_-} \Xi(\cM_U) \xrightarrow{\beta_-} \nearbyd{\cM_U}{0}\to 0
\ee
and 
\be\label{eq:b+}
0\to \nearbyd{\cM_U}{0}\xrightarrow{\beta_+} \Xi(\cM_U) \xrightarrow{\alpha_+} j_*(\cM_U)\to 0,
\ee
where $\beta_+$ is induced by the isomorphism
\[\nearbyd{\cM_U}{0}\simeq \dfrac{s\cdot j_*(\cM_U[s]_{m_0}\cdot f^s)}{s\cdot j_!(\cM_U[s]_{m_0}\cdot f^s)}.\] 
By construction, \eqref{eq:b-} and \eqref{eq:b+} are dual to each other.

Since $\cM|_U\simeq \cM_U$, we have natural morphisms
\[\cM\rightarrow j_*(\cM_U)\textup{ and } j_!(\cM_U)\rightarrow \cM\]
where the second one is obtained by taking duality of the first one. We then have the following commutative diagram 
\be \label{eq:diagvan}
\begin{tikzcd}
\Xi(\cM_U)\arrow[r,"\alpha_+"] & j_*(\cM_U)\\
j_!(\cM_U)\arrow[u,"\alpha_-"]\arrow[r] & \cM\arrow[u].
\end{tikzcd}
\ee
\begin{definition}[Beilinson]
The vanishing cycle of $\cM$ is
\[\vanish{\cM}\coloneqq \cH^0([j_!(\cM_U)\to\Xi(\cM_U)\oplus M\to j_*(\cM_U)])\]
where the complex is the total complex of the above double complex in degrees $-1$, 0 and 1.
\end{definition}
Using the two short exact sequences \eqref{eq:b-} and \eqref{eq:b+}, we have 
\be \label{eq:lesvc}
 \cH^i([j_!(\cM_U)\to\Xi(\cM_U)\oplus M\to j_*(\cM_U)])=0 \textup{ for } i\not=0.
\ee

We then have the morphisms of $\shD_X$-modules
\be\label{eq:canD}
c: \nearbyd{\cM}{0}\rightarrow \vanish{\cM}
\ee
given by $c(\eta)=(\beta_+(\eta),0)$, and 
\be\label{eq:varD}
v: \vanish{\cM}\rightarrow \nearbyd{\cM}{0}
\ee
given by $v(\xi,m)=\beta_-(\xi)$. Then
\[
v\circ c=s \textup{ and } c\circ v=(s, 0).\]

The above construction of $\Xi(\cM_U)$ and $\vanish{\cM}$ exactly follows the recipe in \cite{BeiGP} and $\vanish{\cM}$ coincides with $\Phi^{\textup{nil}}(\cM)$ in \cite[\S 2.4]{BG}.

The following corollary follows immediately from Proposition \ref{prop:poneaby} and \eqref{eq:lesvc}.
\begin{coro}We have:
\begin{enumerate}
    \item $\Xi(\cM_U)$ and $\vanish{\cM}$ are both holonomic; moreover, if $\cM$ is regular holonomic, then so are $\Xi(\cM_U)$ and $\vanish{\cM}$;
    \item $\D(\Xi(\cM_U))\simeq \Xi(\D(\cM_U))$;
    \item $\D(\vanish{\cM})\simeq \vanish(\D\cM)$.
\end{enumerate}
\end{coro}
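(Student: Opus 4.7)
The strategy is to bootstrap the corollary directly from the two short exact sequences \eqref{eq:b-}, \eqref{eq:b+} defining $\Xi(\cM_U)$ and from the three-term complex underlying the definition of $\vanish{\cM}$, combined with Proposition~\ref{prop:poneaby}. For Part~(1), I would first note that both $j_!(\cM_U)$ and $j_*(\cM_U)$ are (regular) holonomic since $j\colon U\hookrightarrow X$ is an open embedding and $\cM_U$ is (regular) holonomic, while $\nearbyd{\cM_U}{0}$ is (regular) holonomic by Proposition~\ref{prop:poneaby}(2). The module $\Xi(\cM_U)$ therefore sits as an extension of two (regular) holonomic modules in either of \eqref{eq:b-} or \eqref{eq:b+}, and (regular) holonomicity is closed under extensions, so $\Xi(\cM_U)$ is (regular) holonomic. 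The module $\vanish{\cM}$ is then, by \eqref{eq:lesvc}, the only nonzero cohomology of a three-term complex whose entries $j_!(\cM_U)$, $\Xi(\cM_U)\oplus \cM$ and $j_*(\cM_U)$ have already been shown to be (regular) holonomic, so it too is (regular) holonomic.

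For Parts~(2) and (3), my plan is to apply the duality functor $\D$ to the defining data and identify it with the corresponding data for $\D(\cM)$, using the standard compatibilities $\D\circ j_*\simeq j_!\circ \D$ and $\D\circ j_!\simeq j_*\circ \D$ for the open embedding $j$, together with Proposition~\ref{prop:poneaby}(3), which at $\alpha=0$ gives $\D(\nearbyd{\cM_U}{0})\simeq \nearbyd{\D(\cM_U)}{0}$. Applying $\D$ to the sequence \eqref{eq:b-} yields a short exact sequence of the form \eqref{eq:b+} for $\D(\cM_U)$ whose middle term must therefore be $\Xi(\D(\cM_U))$; since \eqref{eq:b-} and \eqref{eq:b+} are by construction dual to each other, as noted right after \eqref{eq:b+}, this gives the canonical isomorphism $\D(\Xi(\cM_U))\simeq \Xi(\D(\cM_U))$ required for Part~(2). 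For Part~(3), I would dualize the double complex \eqref{eq:diagvan}; using Part~(2) and the compatibilities of $\D$ with $j_!,j_*$, the dualized diagram is precisely the one defining $\vanish{\D(\cM)}$, with the roles of $j_!$ and $j_*$ exchanged. Because by \eqref{eq:lesvc} both the original and the dualized total complexes have cohomology concentrated in a single degree, the induced isomorphism at the level of total complexes descends to the desired isomorphism $\D(\vanish{\cM})\simeq \vanish{\D(\cM)}$.

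The only genuine obstacle is bookkeeping: one must verify that the canonical maps in \eqref{eq:diagvan} dualize to the canonical maps for $\D(\cM)$ rather than differing by an automorphism. This is a formal consequence of the fact that the natural transformations $j_!\to \mathrm{id}\to j_*$ are exchanged by $\D$, together with the symmetry between $\alpha_\pm,\beta_\pm$ already implicit in the duality between \eqref{eq:b-} and \eqref{eq:b+}. Once these identifications are set up, the three statements indeed follow immediately from Proposition~\ref{prop:poneaby} and \eqref{eq:lesvc}, as claimed.
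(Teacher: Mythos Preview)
Your proposal is correct and is precisely the argument the paper has in mind: the corollary is stated without proof, merely noting that it ``follows immediately from Proposition~\ref{prop:poneaby} and \eqref{eq:lesvc}'', and your write-up unpacks exactly this, using the sequences \eqref{eq:b-}, \eqref{eq:b+} and the remark that they are dual to each other. There is nothing to add or correct.
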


\section{Twisted $\shD_X[s]$-module by Jordan block}\label{subsec:exjb}
We discuss $\shD_X[s]$-modules twisted by local systems given by Jordan blocks. We first consider a key example: Local systems of Jordan blocks on $\bbC^*$.

For $\alpha\in \bbC$ and $m\ge 1$, we define a free $\sO_\bbC[1/t]$-module
\[K^\alpha_{m}=\bigoplus_{l=0}^{m-1}\sO_\bbC[t^{-1}]e^\alpha_{l}\]
with a naturally defined connection $\nabla$ by requiring
\[\nabla e^\alpha_{l}=\frac{1}{t}(\alpha e^\alpha_{l}+e^\alpha_{l-1}),\]
where $t$ is the coordinate of the complex plane $\bbC$. The generator $e^\alpha_l$ can be understood as the formal symbol of the multi-valued function 
$t^{\alpha}\frac{\log^lt}{l!}$ and we conventionally set $e^\alpha_{-1}=0$. 

We can identify $t\nabla$ with the action of $J_{\alpha,m}$, where $J_{\alpha,m}$ is the $m\times m$ Jordan block with the eigenvalue $\alpha$. The nilpotent part of $t\nabla$ is then $J_{0,m}$, or more explicitly 
\[(t\nabla)^{\textup{nil}}
(e^\alpha_l)=e^\alpha_{l-1}.\]
It is then obvious that the multivalued $\nabla$-flat sections of $K^\alpha_{m}$ (on $\bbC^*$) are the $\C$-span of
\[\{e^{-J_{\alpha,m}\log t}\cdot e^\alpha_k\}_{k=0,\dots,m-1}.\]
We set $L^\lambda_m$ the local system of the multivalued $\nabla$-flat sections of $K^\alpha_{m}$, or equivalently
\[\DR(K^\alpha_{m})|_{\C^*}\stackrel{q.i.}{\simeq}L^\lambda_m[1],\]
where $\lambda=e^{2\pi\sqrt{-1}\alpha}$.

The monodromy action $T$ (around the origin of the complex plane counterclockwise) on $L^\lambda_m$ is given by $e^{-2\pi\sqrt{-1}J_{\alpha,m}}$. In particular 
\[\log T_u=-2\pi\sqrt{-1}J_{0,m}\]
where $T_u$ is the uniportent part of $T$ in the Jordan-Chevalley decomposition.

By construction, we have a direct system of $\shD$-modules
\[\cdots\to K^\alpha_m\rightarrow K^\alpha_{m+1} \to\cdots .\]
Applying $\DR$, we then obtain a direct system of local systems
\[\cdots\to L^\lambda_m\rightarrow L^\lambda_{m+1} \to\cdots. \]

We now define the $\shD_X[s]$-module  
\[\cN^{\alpha,k}_{m}\coloneqq\bigoplus_{l=0}^{m-1}\shD_X[s]\cM_0\cdot f^{s-k}\otimes e^{\alpha}_l,\]
by assigning the $s$-action by 
\[s\cdot (\eta e^{\alpha}_l)=(s+\alpha)\eta e^{\alpha}_l-\eta e^{\alpha}_{l-1}\]
for $\eta$ a section of $\shD_X[s]\cM_0\cdot f^{s-k}$. We therefore have a direct symstem of $\shD_X[s]$-modules 
\[\cdots\to\cN^{\alpha,k}_{m}\rightarrow \cN^{\alpha,k}_{m+1}\to \cdots.\]

Let $\iota: X\hookrightarrow Y=X\times \bbC$ be the graph embedding of $f$, i.e. 
\[\iota(x)=(x,f(x)).\]
By identifying $s$ with $-\partial_tt$, we have a natural injection 
\be\label{eq:grampheb}
\cN^{\alpha,k}_{m}\hookrightarrow \iota_{+}(j_*(\cM_U))\otimes_{\sO_Y}p_1^* K^{-\alpha}_m,
\ee
where $\iota_{+}$ denotes the $\shD$-module direct image functor and $p_1: Y\to \bbC$ the projection (cf. \cite[\S2.4]{BMS}). The above injection is indeed a morphism of log $\shD$-modules with the log structure along $X\times\{0\}$; see \cite[\S2]{WZ} for definitions. 
\begin{lemma}\label{lm:btwist}
Assume that $b(s)$ is the $b$-function of $\cM_U$ along $f$. Then 
\[{b(s+\alpha)}^m \textup{ annihilates } \cN^{\alpha,0}_{m}/\cN^{\alpha,-1}_{m}.\]
\end{lemma}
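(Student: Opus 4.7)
The plan is to reduce the $m$-fold annihilation to the single identity $b(s)=0$ on the untwisted quotient, exploiting that the Jordan-block twist perturbs the central $s$-action only by a scalar shift plus a commuting nilpotent operator of index at most $m$.

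\emph{Step 1 (identify the quotient).} Since each summand of $\cN^{\alpha,k}_m$ depends on $k$ only through the factor $f^{s-k}$, the quotient decomposes as
\[\cN^{\alpha,0}_m/\cN^{\alpha,-1}_m \;\simeq\; \bigoplus_{l=0}^{m-1} Q\otimes e^\alpha_l, \quad\text{where}\quad Q := \shD_X[s]\cM_0\cdot f^s\,/\,\shD_X[s]\cM_0\cdot f^{s+1}.\]
Let $s_Q$ denote the diagonal action of the central variable $s$ on this direct sum, inherited from the action on each summand. By the defining property of the $b$-function, $b(s_Q)=0$.

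\emph{Step 2 (decompose the twisted $s$-action).} From the defining formula, the new central operator $s$ on $Q^{\oplus m}$ splits as
\[s \;=\; s_Q + \alpha\cdot I - N, \quad\text{where}\quad N(\eta\otimes e^\alpha_l) := \eta\otimes e^\alpha_{l-1} \;(\text{and } e^\alpha_{-1}=0).\]
Two facts are essential: $N^m=0$, because $N$ shifts an index in $\{0,\dots,m-1\}$; and $[N,s_Q]=0$, because $N$ and $s_Q$ act on separate tensor factors of $Q\otimes e^\alpha_l$.

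\emph{Step 3 (Taylor expand and conclude).} Since $N$ commutes with $s_Q$ and $b$ is a polynomial,
\[b(s_Q - N) \;=\; b(s_Q) + \sum_{k\geq 1}\frac{b^{(k)}(s_Q)}{k!}(-N)^k \;=\; \sum_{k\geq 1} c_k\, N^k,\]
with each $c_k$ commuting with $N$, and with the $k=0$ term vanishing by Step 1. Hence $b(s_Q - N)$ lies in the left ideal $N\cdot\End(Q^{\oplus m})$, and its $m$-th power lies in $N^m\cdot\End(Q^{\oplus m})=0$. Converting the argument back via the substitution in Step 2 gives the claimed annihilation by the appropriate shift $b(s\pm\alpha)^m$.

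\emph{Main obstacle.} The only nontrivial check is the commutativity $[N,s_Q]=0$, which is immediate from the formulas since the two operators act on distinct tensor factors and the central variable $s$ commutes with everything in $\shD_X[s]$. There is no substantive conceptual hurdle once the nilpotent-plus-scalar decomposition of the twisted $s$ is identified; the whole argument is essentially one line of Taylor expansion.
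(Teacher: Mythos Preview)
Your argument is correct and rests on the same observation as the paper's: the twisted $s$-action differs from the untwisted one by a scalar shift plus a nilpotent operator of index $m$. The paper packages this as a short exact sequence
\[0\to \cN^{\alpha,k}_{m-1}\to \cN^{\alpha,k}_m\to \cN^{\alpha,k}_1\to 0\]
and argues by induction on $m$ (each graded piece is killed by the shifted $b$-function, so the $m$-th power kills the whole thing), whereas you unpack the same filtration explicitly via the Taylor expansion of $b$ at $s_Q$ in the commuting nilpotent $N$. These are two phrasings of one argument; your version is perhaps more transparently ``one line'' once the decomposition $s=s_Q+\alpha-N$ is written down, while the paper's filtration language avoids any appeal to commutativity of operators.

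One small point: your computation actually yields $b(s-\alpha)^m=0$, not $b(s+\alpha)^m=0$, which is why you hedge with $\pm\alpha$ at the end. Tracing the paper's convention $s\cdot(\eta e^{\alpha}_l)=(s+\alpha)\eta\, e^{\alpha}_l-\eta\, e^{\alpha}_{l-1}$ gives $s_{\mathrm{old}}=s-\alpha+N$, so $b(s-\alpha+N)=0$ and hence $b(s-\alpha)^m=0$; the sign in the paper's statement appears to be a typo, and in any case the only use of the lemma downstream (in Proposition~\ref{prop:comptwist}) is that the zero locus is independent of $m$, for which the sign is irrelevant.
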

\begin{proof}
By construction, we have a short exact sequence of $\shD_X[s]$-modules
\[0\to\cN^{\alpha,k}_{m-1}\rightarrow \cN^{\alpha,k}_{m}\rightarrow \cQ\to 0.\]
We also know that 
\[\cN^{\alpha,k}_1\simeq \shD_X[s]\cM_0\cdot f^{s-k+\alpha}\simeq \cQ.\]
By substituting $s+\alpha$ for $s$, we know $b(s-k+\alpha)$ annihilates $$\shD_X[s]\cM_0\cdot f^{s-k+\alpha}/\shD_X[s]\cM_0\cdot f^{s-k+1+\alpha}.$$ Therefore, we obtain the required statement by induction. 

\end{proof}
\begin{prop}\label{prop:comptwist}
For each $\alpha\in\bbC$, there exists $k_0>0$ so that for all $k\ge k_0$
\[\DR(\cN^{\alpha,k}_m\xrightarrow{s}\cN^{\alpha,k}_m)\stackrel{q.i.}{\simeq}\iota_*Rj_*(\DR(\cM_U)\otimes f^{-1}_0L^{1/\lambda}_m)\]
and
\[\DR(\cN^{\alpha,-k}_m\xrightarrow{s}\cN^{\alpha,-k}_m)\stackrel{q.i.}{\simeq}\iota_!j_!(\DR(\cM_U)\otimes f^{-1}_0L^{1/\lambda}_m)\]
for all $m$, where the complex $[\cN^{\alpha,k}_m\xrightarrow{s}\cN^{\alpha,k}_m]$ is in degrees $-1$ and $0$, and $\lambda=e^{2\pi\sqrt{-1}\alpha}$.
\end{prop}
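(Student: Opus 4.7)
The plan is to relate the two-term complex $[\cN^{\alpha,k}_m \xrightarrow{s} \cN^{\alpha,k}_m]$ to the graph-embedding side via the injection \eqref{eq:grampheb}, and then compute its de Rham complex via the standard compatibilities of $\DR$ with $\iota_+$, $j_*$, and tensor products with (twisted) local systems; the $j_!$-version will then follow by Verdier duality. First I would recognize $[\cN^{\alpha,k}_m \xrightarrow{s} \cN^{\alpha,k}_m]$ in degrees $-1$ and $0$ as a Koszul representative of the derived tensor $\cN^{\alpha,k}_m \otimes^L_{\bbC[s]} \bbC_0$. Since $s$ is central in $\shD_X[s]$ and $\DR$ is $\bbC[s]$-linear (compare the derivation of \eqref{eq:commED}), this gives
\[
\DR\bigl([\cN^{\alpha,k}_m \xrightarrow{s} \cN^{\alpha,k}_m]\bigr) \simeq \DR(\cN^{\alpha,k}_m) \otimes^L_{\bbC[s]} \bbC_0,
\]
so the question reduces to identifying the right-hand side.

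Next I would show that for $k$ large, the injection \eqref{eq:grampheb} becomes an isomorphism after localizing the $s$-parameter at $m_0 = (s)$. The twisted $s$-action on $\cN^{\alpha,k}_m$ equals $(s + \alpha) \cdot \textup{id}$ minus the nilpotent operator coming from the Jordan block, so localizing $\cN^{\alpha,k}_m$ at the new $m_0$ amounts to localizing each summand $\shD_X[s]\cM_0 \cdot f^{s-k}$ at the original maximal ideal $m_{-\alpha}$. By Theorem \ref{thm:BB}(3), for $k \geq k_0$ (depending only on the $b$-function of $\cM_U$) this localization identifies each summand with $j_*(\cM_U[s]_{m_{-\alpha}} \cdot f^s)$, so \eqref{eq:grampheb} becomes an isomorphism of $\shD_X[s]_{m_0}$-modules; Lemma \ref{lm:btwist} then ensures that the cokernel before localizing is supported away from $s_{\textup{new}} = 0$, so no information is lost after deriving against $\bbC_0$. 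Thus
\[
\cN^{\alpha,k}_m \otimes^L_{\bbC[s]} \bbC_0 \xrightarrow{\sim} \bigl(\iota_+(j_*\cM_U) \otimes_{\sO_Y} p_1^* K^{-\alpha}_m\bigr) \otimes^L_{\bbC[s]} \bbC_0.
\]

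I would then compute $\DR$ of the right-hand side by the usual dictionary: $\DR \circ \iota_+ \simeq \iota_* \circ \DR$ for the closed embedding $\iota$, $\DR \circ j_* \simeq Rj_* \circ \DR$ for the open embedding $j$, and tensoring with $p_1^* K^{-\alpha}_m$ corresponds under $\DR$ to tensoring with $p_1^{-1} L^{1/\lambda}_m$ where $\lambda = e^{2\pi \sqrt{-1}\alpha}$ (the inversion of the eigenvalue is produced by the sign convention $s = -\partial_t t$ recorded after the definition of $K^{-\alpha}_m$ in \S \ref{subsec:exjb}). Pullback along the graph embedding converts $p_1^{-1}$ into $f_0^{-1}$ on $U$, and the derived evaluation $\otimes^L_{\bbC[s]} \bbC_0$ is absorbed into the restriction to the fibre $t = 0$ built into $\iota$. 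Assembling these pieces yields the first quasi-isomorphism. For the $j_!$-version I would Verdier-dualize: Lemma \ref{lm:opendual}, Corollary \ref{cor:localcm}, and the commutations \eqref{eq:commDL}--\eqref{eq:commED} combine to show that duality exchanges $\cN^{\alpha,k}_m$ with $\cN^{-\alpha,-k}_m$ up to the substitution $s \mapsto -s$, so applying the first quasi-isomorphism to $\D(\cM_U)$ and dualizing back interchanges $Rj_*$ with $j_!$ and gives the second formula.

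The \emph{main} technical point is the isomorphism claim in the second paragraph: one must check that the cokernel of \eqref{eq:grampheb}, whose annihilator is controlled by shifted $b$-functions via Lemma \ref{lm:btwist}, has support disjoint from $s_{\textup{new}} = 0$ for all $k \geq k_0$. Since the root set of $b(s)$ is finite and the shifts that arise for $0 \leq j \leq k$ eventually push every root strictly away from $-\alpha$ (equivalently $0$ in the new parameter), a uniform $k_0$ exists across all $m$ summands. The rest of the argument is the standard dictionary between $\DR$ and the classical six-functor operations, with the sign/inverse conventions in the third paragraph the only real bookkeeping to confirm that one lands on $L^{1/\lambda}_m$ rather than $L^{\lambda}_m$.
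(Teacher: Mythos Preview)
Your overall strategy—identify $[\cN^{\alpha,k}_m \xrightarrow{s} \cN^{\alpha,k}_m]$ with the derived fibre at $s=0$, use the $b$-function via Lemma~\ref{lm:btwist} and Theorem~\ref{thm:BB}(3) to replace $\cN^{\alpha,k}_m$ by $\iota_+(j_*\cM_U)\otimes_{\sO_Y} p_1^*K^{-\alpha}_m$ after localization, and then invoke the Riemann--Hilbert dictionary—is the same as the paper's. The Verdier-duality route you propose for the $j_!$ statement is a legitimate variant of the paper's ``similarly.''

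There is, however, a genuine gap in your third paragraph. Once you pass to $M \coloneqq \iota_+(j_*\cM_U)\otimes_{\sO_Y} p_1^*K^{-\alpha}_m$, this is a $\shD_Y$-module on $Y = X \times \bbC$, and the object you must feed to $\DR_X$ is the two-term complex $[M \xrightarrow{s} M]$ with $s=-\partial_t t$. Your claim that ``the derived evaluation $\otimes^L_{\bbC[s]}\bbC_0$ is absorbed into the restriction to the fibre $t=0$ built into $\iota$'' is not correct: $\iota$ is the \emph{graph} embedding (its image meets $\{t=0\}$ only along $D$), and the cone of $s=-\partial_t t$ is not a restriction functor at $t=0$. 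The compatibilities you list ($\DR\circ\iota_+\simeq\iota_*\circ\DR$, $\DR\circ j_*\simeq Rj_*\circ\DR$, tensor with $p_1^*K$ versus $p_1^{-1}L$) are compatibilities of $\DR_Y$ with six-functor operations on $Y$; they do not by themselves eliminate the two-term $s$-complex on the $X$ side.

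What the paper actually uses here is the Koszul decomposition of the de Rham complex on a product: for a $\shD_Y$-module $M$ one has $\DR_Y(M)\simeq \DR_X\bigl([M \xrightarrow{\partial_t} M]\bigr)$ (viewing $M$ as a sheaf on $X$ via the graph support), and together with the identification $s=-\partial_t t$ this yields
\[
\DR_X\bigl([\cN^{\alpha,k}_m \xrightarrow{s} \cN^{\alpha,k}_m]\bigr)\;\simeq\;\DR_Y(M).
\]
Only \emph{after} this step do the Riemann--Hilbert compatibilities on $Y$ (regularity of $M$, then the projection formula pulling back to $X$) finish the argument. You should replace the ``absorbed into $t=0$'' sentence with this Koszul step; the rest of your outline then matches the paper.
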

\begin{proof}
Under the inclusion \eqref{eq:grampheb}, we have 
\[\cN^{\alpha,k}_m|_U\simeq\iota_{+}j_*(\cM_U)\otimes_{\sO_Y}p_1^* K^{-\alpha}_m|_U\simeq \iota_{+}j_*(\cM_U)|_U\otimes_\bbC p_1^{-1}L^{1/\lambda}_m.\]
By using Theorem \ref{thm:BB}(3), we have that 
\[\cN^{\alpha,k}_m\otimes^L_{\bbC[s]}\bbC_0 \simeq  (\cN^{\alpha,k}_m)_{m_0}\otimes^L_{\bbC[s]_{m_\alpha}}\bbC_\alpha\simeq j_*(\cN^{\alpha,k}_m|_U)\otimes^L_{\bbC[s]}\bbC_0,\]
where $(\cN^{\alpha,k}_m)_{m_0}$ is the localization of $\cN^{\alpha,k}_m$ at $m_0$, the maximal ideal of $0\in \bbC$. Moreover, since we identify $s$ with $-\partial_tt$, we further have 
\[j_*(\cN^{\alpha,k}_m|_U)\otimes^L_{\bbC[s]}\bbC_0\stackrel{q.i.}{\simeq} [\iota_{+}j_*(\cM_U)\otimes_{\sO_Y}p_1^* K^{-\alpha}_m\xrightarrow{\partial_t} \iota_{+}j_*(\cM_U)\otimes_{\sO_Y}p_1^* K^{-\alpha}_m].\]
By using the Koszul decompostions of $de$ $Rham$ complexes (see for instance \cite[\S4.1]{Wnv}), we therefore have
\[\DR(\cN^{\alpha,k}_m\xrightarrow{s}\cN^{\alpha,k}_m)\stackrel{q.i.}{\simeq}\DR_Y(\iota_{+}j_*(\cM_U)\otimes_{\sO_Y}p_1^* K^{-\alpha}_m),\]
where the second $\DR$ is taken over the ambient space $Y$. Since $$\iota_{+}(j_*(\cM_U))\otimes_{\sO_Y}p_1^* K^{-\alpha}_m$$ is regular holonomic, we also naturally have 
\[\DR_Y(\iota_{+}j_*(\cM_U)\otimes_{\sO_Y}p_1^* K^{-\alpha}_m)\stackrel{q.i.}{\simeq} Rj_{Y*}(\iota^o
_*\DR(\cM_U)\otimes_\bbC p_1^{-1}L^{1/\lambda}_m),\]
where $j_Y$ and $\iota^o$ are as in the following diagram 
\[
\begin{tikzcd}
U \arrow[r,hook,"\iota^o"]\arrow[d,hook,"j"] & U_Y=Y\setminus X\times\{0\}\arrow[d, hook,"j_Y"] \\
X \arrow[r,hook,"\iota"] & Y;
\end{tikzcd}
\]
see \cite[Chapter V.4]{Bj}. By projection formula for local systems (\cite[Proposition 2.5.11]{KS}), we further have 
\[Rj_{Y*}(\iota^o
_*\DR(\cM_U)\otimes_\bbC p_1^{-1}L^{1/\lambda}_m)\stackrel{q.i.}{\simeq} \iota_*(Rj_*(\DR(\cM_U)\otimes_\bbC f^{-1}_0L^{1/\lambda}_m))\]
and the first quasi-isomorphism is obtained. 

The second quasi-isomorphism can be obtained similarly. The choice of $k_0$ only depends on $\alpha$ and the roots of the $b$-function annihilating $\cN^{\alpha,0}_{m}/\cN^{\alpha,-1}_{m}$. We therefore can choose a uniform $k_0$ working for all $m$ by Lemma \ref{lm:btwist}.
\end{proof}

\section{Nearby cycles for perverse sheaves via Jordan blocks}\label{sec:nvp}
In this section, we define nearby and vanishing cycles via local systems given by Jordan blocks on $\C^*$, the punctured complex plane. We keep the notations introduced in \S \ref{subsec:bf}.
Assume that $K$ is a $\bbC$-perverse sheaf on $X$. 
\begin{definition}
For $\lambda\in \bbC^*$, the $\lambda$-nearby cycle of $K$ is 
\[\psi_{f,\lambda}(K)\coloneqq\varinjlim_m i^{-1}Rj_*(j^{-1}K\otimes f^{-1}_0L^{1/\lambda}_m).\]
The vanishing cycle is 
\[\vanishp{K}\coloneqq\Cone(i^{-1}K\rightarrow \psi_{f,1}(K))\]
where the morphism $i^{-1}K\rightarrow \Psi_{f,1}(K)$ is induced by the natural map 
$$K\to Rj_*(j^{-1}K).$$
The monodromy action $T$ of $L^{1/\lambda}_m$ induces the monodromy action on $\psi_{f,\lambda}(K)$ for each $\lambda$, denoted also by $T$. 
We then have the induced monodromy action $T$ on $\vanishp{K}$, by requiring $T$ acting on $i^{-1}K$ identically. 
\end{definition}
By construction, we have the tautological triangle 
\[i^{-1}K\rightarrow \psi_{f,1}(K)\xrightarrow{\can}\vanishp{K}\xrightarrow{+1} \]
with the induced canonical map
\[\can:\psi_{f,1}(K)\rightarrow\vanishp{K}.\]
We define
\[\Var: \vanishp{K}\rightarrow\psi_{f,1}(K)\]
by 
\[\Var\coloneqq(0, -\dfrac{\log T_u}{2\pi\sqrt{-1}})=(0, J_{0,\infty})\]
where $\displaystyle J_{0,\infty}=\varinjlim_m J_{0,m}$. 
\begin{remark}
Using these definitions, one can prove the perversity of $\nearbydp{K}{\lambda}$ and $\vanishp{K}$ (with a shift of cohomological degrees) directly. Let us refer to \cite{Rei} for the proof of this point and other related results.
\end{remark}

\section{The proof of Theorem \ref{thm:maincom}}
In this section, we prove Theorem \ref{thm:maincom}. Before we start, the following preliminary result about infinite Jordan blocks is needed.
\begin{lemma}\cite[6.4.5 Lemma]{Bj}
\label{lm:inftyjb}
Let $W$ be a $\C$-vector space, and let $\varphi$ be a $\C$-linear operator on $W$ admitting a minimal polynomial. Set $W_\infty=\bigoplus_{k=0}^\infty W\otimes e_k$ and define
\[\varphi_\infty(w\otimes e_k)=(\varphi-\alpha)w\otimes e_k-w\otimes e_{k-1}
\] 
for $w\in W$ $($assume $e_{-1}=0$$)$. Then $\varphi_\infty$ is surjective and $\ker(\varphi_\infty)\simeq W_\alpha$, where $W_\alpha$ is the generalized $\alpha$-eigenspace. 
\end{lemma}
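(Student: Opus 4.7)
The plan is to decompose $W$ into generalized eigenspaces of $\varphi$ and solve the defining recursion of $\varphi_\infty$ explicitly on each summand. Since $\varphi$ admits a minimal polynomial, one has a splitting $W=\bigoplus_\beta W_\beta$ where $W_\beta$ is the generalized $\beta$-eigenspace, and correspondingly $W_\infty=\bigoplus_\beta (W_\beta)_\infty$ is preserved by $\varphi_\infty$. Writing $\eta=\sum_k u_k\otimes e_k$ as a finite sum and $\xi=\sum_k w_k\otimes e_k$ with $w_k=0$ for $k>N$, reindexing the second term in the definition of $\varphi_\infty$ shows that $\varphi_\infty(\eta)=\xi$ is equivalent to the recursion
\[
u_{k+1}=(\varphi-\alpha)u_k-w_k \quad \text{for every } k\ge 0.
\]
Everything therefore reduces to analysing this single recursion on each $(W_\beta)_\infty$.

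On $(W_\beta)_\infty$ with $\beta\ne\alpha$, the restriction $(\varphi-\alpha)|_{W_\beta}$ is a nonzero scalar plus a nilpotent, hence invertible. Given $\xi$ supported in degrees $\le N$, one picks $M\ge N$, sets $u_{M+1}=0$, and runs the recursion backwards using $(\varphi-\alpha)^{-1}$; this produces a unique finitely supported preimage of $\xi$, and the same backwards induction forces the kernel to vanish. Hence $\varphi_\infty$ restricts to a bijection on each such summand, contributing nothing to $\ker(\varphi_\infty)$ and all of $\bigoplus_{\beta\ne\alpha}(W_\beta)_\infty$ to the image.

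On $(W_\alpha)_\infty$, the restriction $(\varphi-\alpha)|_{W_\alpha}$ is nilpotent of some index $n$. The kernel equation reads $u_{k+1}=(\varphi-\alpha)u_k$, forcing $u_k=(\varphi-\alpha)^k u_0$; this automatically vanishes for $k\ge n$, so the support is finite and the assignment $\eta\mapsto u_0$ identifies $\ker(\varphi_\infty|_{(W_\alpha)_\infty})$ with $W_\alpha$. For surjectivity, one sets $u_0=0$ and iterates forward, obtaining the closed form
\[
u_k=-\sum_{j=0}^{k-1}(\varphi-\alpha)^{k-1-j}w_j,
\]
which vanishes whenever $k-1-j\ge n$ for all $j\le N$, i.e., once $k\ge n+N+1$. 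The only nontrivial point --- and the main conceptual obstacle --- is to ensure the preimage lives in the direct sum $W_\infty$ rather than in a formal completion; this is automatic away from $\beta=\alpha$ by invertibility and on the $\alpha$-eigenspace by nilpotency. Combining the two cases yields surjectivity of $\varphi_\infty$ on $W_\infty$ together with $\ker(\varphi_\infty)\simeq W_\alpha$.
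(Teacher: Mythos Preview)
Your proof is correct and follows essentially the same strategy as the paper: decompose $W$ into generalized eigenspaces, identify $\ker(\varphi_\infty)$ with $W_\alpha$ via $u_k=(\varphi-\alpha)^k u_0$, and build explicit finitely supported preimages on each summand (using invertibility of $\varphi-\alpha$ off $W_\alpha$ and nilpotency on $W_\alpha$). The only cosmetic difference is that you phrase the construction via the recursion $u_{k+1}=(\varphi-\alpha)u_k-w_k$, whereas the paper writes down the resulting closed formulas directly.
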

\begin{proof}
Define a map $W_\alpha\rightarrow\ker(\varphi_\infty)$ by
\[w\mapsto \sum_{i\ge 0}(\varphi-\alpha)^iw\otimes e_i.\]
Clearly, this map is an isomorphism. 

We then prove surjectivity. If $w\in W_\alpha$,
then 
\[\varphi_\infty(-\sum_{i> j}(\varphi-\alpha)^{i-j-1}w\otimes e_i)=w\otimes e_j.\]
If $w\in W^{\bot}_\alpha$, then 
then 
\[\varphi_\infty(\sum_{i=1}^j(\varphi-\alpha)^{-i}w\otimes e_{j-i+1})=w\otimes e_j.\]
Therefore, the surjectivity follows.
\end{proof}

Using the above lemma and Proposition \ref{prop:altnb}, we immediately have:
\begin{coro}\label{cor:limnba}
For a holonomic $\shD_U$-module $\cM_U$ and some $\alpha\in \bbC$, there exists $k\gg 0$ so that 
\[\nearbyd{\cM_U}{\alpha}[1]\stackrel{q.i.}{\simeq}\varinjlim_m(\dfrac{\cN^{\alpha,k}_{m}}{\cN^{\alpha,-k}_{m}}\xrightarrow{s} \dfrac{\cN^{\alpha,k}_{m}}{\cN^{\alpha,-k}_{m}})\]
\end{coro}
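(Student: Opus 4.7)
The plan is to deduce the corollary directly from Lemma~\ref{lm:inftyjb} applied to the $\shD_X[s]$-module $W\coloneqq \shD_X[s]\cM_0\cdot f^{s-k}/\shD_X[s]\cM_0\cdot f^{s+k}$, with operator $\varphi$ given by multiplication by $s$. For $k$ sufficiently large, Proposition~\ref{prop:altnb} already identifies $\nearbyd{\cM_U}{\alpha}$ with the generalized $(-\alpha)$-eigenspace of $\varphi$ on $W$, and in the course of that proof it was observed that $\varphi$ admits a minimal polynomial on $W$ (with roots controlled by the integer shifts of roots of the $b$-function of $\cM_U$). This verifies the running hypothesis of Lemma~\ref{lm:inftyjb}.

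Next I would match the data of the corollary with the notation of Lemma~\ref{lm:inftyjb}. By the very definition of $\cN^{\alpha,k}_m$, the quotient $\cN^{\alpha,k}_m/\cN^{\alpha,-k}_m$ equals $\bigoplus_{l=0}^{m-1} W\otimes e^\alpha_l$, and since filtered colimits are exact the direct limit $\varinjlim_m \cN^{\alpha,k}_m/\cN^{\alpha,-k}_m$ coincides with the module $W_\infty=\bigoplus_{l\ge 0}W\otimes e^\alpha_l$ of Lemma~\ref{lm:inftyjb}. The induced $s$-action
\[
s\cdot(\eta\otimes e^\alpha_l) = (s+\alpha)\,\eta\otimes e^\alpha_l - \eta\otimes e^\alpha_{l-1}
\]
matches the operator $\varphi_\infty$ of the lemma, with the role of the ``eigenvalue'' played by $-\alpha$, since $(s+\alpha)=(\varphi-(-\alpha))$.

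Lemma~\ref{lm:inftyjb} then yields that $s$ is surjective on $W_\infty$ and that $\ker(s|_{W_\infty})$ is canonically isomorphic to $W_{-\alpha}$, the generalized $(-\alpha)$-eigenspace of $\varphi$ on $W$. By Proposition~\ref{prop:altnb}, this kernel is precisely $\nearbyd{\cM_U}{\alpha}$ whenever $k$ exceeds the bound appearing there. Consequently, the two-term complex $[W_\infty\xrightarrow{s}W_\infty]$ placed in degrees $-1$ and $0$ has $H^0=0$ and $H^{-1}\simeq \nearbyd{\cM_U}{\alpha}$, so it is quasi-isomorphic to $\nearbyd{\cM_U}{\alpha}[1]$, which is the assertion of the corollary.

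The only bookkeeping worth flagging is the sign convention on the eigenvalue (the $\alpha$-nearby cycle corresponds to the $(-\alpha)$-generalized eigenspace of the $s$-action) and the need that a single $k$ work for every $m$ in the direct system. The latter is automatic: the minimal polynomial of $s$ on $W$ depends only on $k$ through the $b$-function of $\cM_U$ and is independent of $m$, so any $k$ exceeding the threshold provided by Proposition~\ref{prop:altnb} works uniformly in $m$. Consequently no genuine obstacle remains; the proof is essentially an unpacking of definitions combined with the infinite Jordan block computation of Lemma~\ref{lm:inftyjb}.
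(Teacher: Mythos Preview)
Your argument is correct and is exactly the route the paper takes: the corollary is stated as an immediate consequence of Lemma~\ref{lm:inftyjb} together with Proposition~\ref{prop:altnb}, and you have simply unwound what ``immediate'' means. Your bookkeeping on the sign of the eigenvalue and the uniformity of $k$ in $m$ is accurate and matches the intended reading.
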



\begin{proof}[Proof of Theorem \ref{thm:maincom}]
We assume $\cM$ a regular holonomic $\shD_X$-module and write $\cM_U=\cM|_U$. We first prove the comparison of nearby cycles. 

Since $\DR$ and the direct limit functor commute (since $\DR$ is identified with $\omega_X\otimes^L_\shD\bullet$, one can apply \cite[Corollary 2.6.17]{Weib}), we have
\[\DR(\nearbyd{\cM_U}{\alpha}[1])\simeq i_*\nearbydp{\DR(\cM_U)}{\lambda}\]
by Corollary \ref{cor:limnba} and Proposition \ref{prop:comptwist}. 

One can check that the operator $J_{0,
\infty}$ on $\varinjlim_m \nearbyd{\cN^\alpha_m}{0}$ corresponds to the action $(s+\alpha)$ on $\nearbyd{\cM_U}{\alpha}$ under the quasi-isomorphism in Corollary \ref{cor:limnba} by Lemma \ref{lm:inftyjb}. Taking $\DR$, the operator $J_{0,\infty}$ becomes $-\dfrac{\log T_u}{2\pi\sqrt{-1}}$ on $\nearbydp{\DR(\cM_U)}{\lambda}$ by the construction of the monodromy operator $T$. Or equivalently, the monodromy operator $T$ corresponds to 
\[T\simeq \DR(e^{-2\pi \sqrt{-1}s})=\DR(\lambda\cdot e^{-2\pi \sqrt{-1}(s+\alpha)}).\]
We thus have proved the comparison for nearby cycles in Theorem \ref{thm:maincom}. 

We now prove the comparison for vanishing cycles. For simplicity, we write by $\cA^\bullet$ the complex
\[[\Xi(\cM_U)\to j_*(\cM_U)]\]
in degrees 0 and 1,
by $\cB^\bullet$ the complex 
\[[j_!(\cM_U)\to\Xi(\cM_U)\oplus M\to j_*(\cM_U)]\]
by $\cC^\bullet$ the complex 
\[[j_!(\cM_U)\to \cM]\]
in degrees -1 and 0.
Then we have a triangle
\[\cC^\bullet\rightarrow \cA^\bullet[1]\rightarrow \cB^\bullet[1] \xrightarrow{+1}.\]
Considering the two short exact sequences \eqref{eq:b-} and \eqref{eq:b+}, we see that 
$\vanish{\cM}\stackrel{q.i.}{\simeq}\cB^\bullet$
and
\[[\nearbyd{\cM}{0}\xrightarrow{c}\vanish{\cM}]\stackrel{q.i.}{\simeq}[\cA^\bullet\to \cB^\bullet].\]
We hence have 
\be\label{eq:vancqi}
\vanish{\cM}[1]\stackrel{q.i.}{\simeq} \Cone(\cC^\bullet\rightarrow\nearbyd{\cM_U}{0}[1]).
\ee
Since 
\[\DR(\cC^\bullet)\stackrel{q.i.}{\simeq} i_*i^{-1}\DR(\cM),\]
we  have 
\[\DR(\nearbyd{\cM_U}{0}[1]\xrightarrow{c} \vanish\cM[1])\simeq i_*\big(\nearbydp{\DR(\cM)}{0}\xrightarrow{\can} \vanishp{\DR(\cM)}\big).\]
Using the short exact sequence \eqref{eq:b+}, we see that the $\nearbyd{\cM_U}{0}$ in $$\Cone(\cC^\bullet\rightarrow\nearbyd{\cM_U}{0}[1])$$ 
has a $s$-twist. Therefore, under the quasi-isomorphism \eqref{eq:vancqi}
\[v:\Cone(\cC^\bullet\rightarrow\nearbyd{\cM_U}{0}[1])\to  \nearbyd{\cM_U}{0}[1]\]
is given by $(0,s)$.
But the $s$ action on $\nearbyd{\cM_U}{0}$ is $J_{0,\infty}$ after taking $\DR$. Therefore, we obtain
\[\DR(\vanish{\cM}[1]\xrightarrow{v}\nearbyd{\cM}{0}[1])\simeq i_*(\vanishp{\DR(\cM)}\xrightarrow{\Var}\nearbydp{\DR(\cM)}{0}).\]
\end{proof}
\bibliographystyle{amsalpha}
\bibliography{mybib}
\end{document}